\newtheorem*{maintheorem*}{Main Theorem}
\newtheorem{theorem}{Theorem}[section]
\newtheorem{prop}[theorem]{Proposition}
\newtheorem{question}[theorem]{Question}
\newtheorem{lemma}[theorem]{Lemma}
\newtheorem{cor}[theorem]{Corollary}
\theoremstyle{definition}
\newtheorem{definition}[theorem]{Definition}
\newtheorem{example}[theorem]{Example}
\newtheorem{remark}[theorem]{Remark}
\numberwithin{equation}{section}
\newcommand{\cc}{\mathbb{C}}
\newcommand{\rr}{\mathbb{R}}
\newcommand{\nn}{\mathbb{N}}
\newcommand{\zz}{\mathbb{Z}}
\newcommand{\bb}{\mathcal{B}}
\newcommand{\ii}{\mathcal{I}}
\newcommand{\T}{\mathcal{T}}
\newcommand{\mfb}{\mathfrak{b}}
\newcommand{\mfc}{\mathfrak{c}}
\newcommand{\mfd}{\mathfrak{d}}
\newcommand{\mff}{\mathfrak{f}}
\newcommand{\mfi}{\mathfrak{i}}
\newcommand{\mfr}{\mathfrak{r}}
\newcommand{\mfs}{\mathfrak{s}}
\newcommand{\mft}{\mathfrak{t}}
\newcommand{\mfu}{\mathfrak{u}}
\keywords{matroids, tiling matroids, lozenge tilings, complete flag arrangement, regular subdivision of a triangle}
\begin{document}
	
	\mbox{}
	\title{Tilings and matroids on \\ regular subdivisions of a triangle}
	\author{Felix Gotti}
	\author{Harold Polo}
	
	\address{Department of Mathematics\\UC Berkeley\\Berkeley, CA 94720}
	\email{felixgotti@berkeley.edu, haroldpolo@berkeley.edu}
	
	\date{\today}
	
	\begin{abstract}
		In this paper we investigate a family of matroids introduced by Ardila and Billey to study one-dimensional intersections of complete flag arrangements of $\cc^n$. The set of lattice points $P_n$ inside the equilateral triangle $S_n$ obtained by intersecting the nonnegative cone of $\rr^3$ with the affine hyperplane $x_1 + x_2 + x_3 = n-1$ is the ground set of a matroid $\T_n$ whose independent sets are the subsets $S$ of $P_n$ satisfying that $|S \cap P| \le k$ for each translation $P$ of the set $P_k$. Here we study the structure of the matroids $\T_n$ in connection with tilings of $S_n$ into unit triangles, rhombi, and trapezoids. First, we characterize the independent sets of $\T_n$, extending a characterization of the bases of $\T_n$ already given by Ardila and Billey. Then we explore the connection between the rank function of $\T_n$ and the tilings of $S_n$ into unit triangles and rhombi. Then we provide a tiling characterization of the circuits of $\T_n$. We conclude with a geometric characterization of the flats of $\T_n$.
	\end{abstract}

\maketitle


\section{Introduction}

Consider for $n,d \in \zz_{\ge 2}$ the $(d-1)$-dimensional simplex
\[
	S_{n,d} := \big\{ (x_1, \dots, x_d) \in \rr^d \mid x_1 + \dots + x_d = n-1 \ \text{ and } \ x_i \ge 0 \ \text{ for } \ 1 \le i \le d \big\},
\]
and let $P_{n,d}$ denote the set of lattice points contained in $S_{n,d}$.
The set $P_{4,3}$ is illustrated in Figure~\ref{fig:lattice point representation of T4}.
\begin{figure}[t]
	\centering
	\includegraphics[width = 6.5cm]{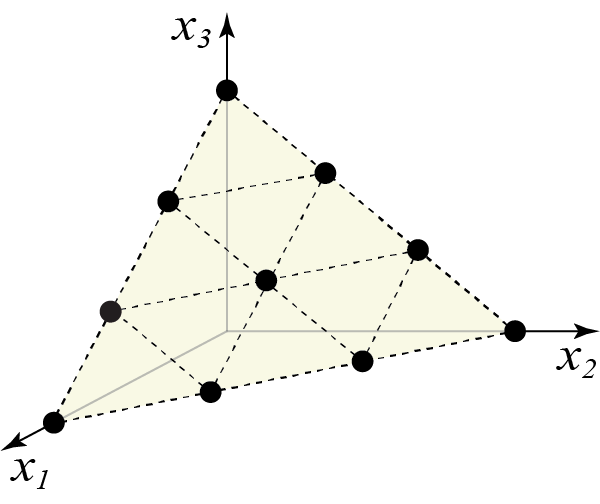}
	\caption{The set of lattice points $P_{4,3}$ and a (shaded) region of the plane in $\rr^3$ containing it. The $10$ lattice points of $P_{4,3}$ are depicted by dark black dots.}
	\label{fig:lattice point representation of T4}
\end{figure}
We denote by $\mathcal{I}_{n,d}$ the collection of all subsets $I$ of $P_{n,d}$ such that, for each $k \le n$, every parallel translate of $P_{k,d}$ contains at most $k$ lattice points of $I$. It has been proved in~\cite{AB07} that $\mathcal{I}_{n,d}$ is the collection of independent sets of a matroid $\T_{n,d}$ with ground set $P_{n,d}$. Here we study some aspects of the combinatorial structure of the matroids $\T_{n,3}$. The case $d = 3$ is particularly important because, as it was proved in \cite{AB07}, the matroid $\T_{n,3}$ is cotransversal and the contransversality property of $\T_{n,3}$ allows to construct the Schubert-generic line arrangement $\mathbf{E}_{n,3}$ explicitly (see \cite[Proposition~9.2]{AB07}). Finally, it is worthy to notice that although contransversal matroids have been the subject of a great deal of investigation, cotransversal matroids other than $\T_{n,3}$ do not seem to have been studied before in connection with tilings.

The original motivation to study the matroids $\T_{n,d}$ comes from \cite{AB07}, where the authors were interested in understanding the set $\mathbf{E}_{n,d}$ of one-dimensional intersections of complete complex flag arrangements.
It turns out that the dependence relations among the lines in $\mathbf{E}_{n,d}$ are encoded in $\T_{n,d}$. As a result, the structure of $\T_{n,d}$ was crucial to understand the linear dependence of line arrangements resulting from intersecting complete flags of $\cc^n$ and, as a byproduct, to facilitate certain computations on the cohomology ring of the flag manifold. The matroids $\T_{3,d}$ have been studied in~\cite{AC13} in connection with acyclic permutations and fine mixed subdivisions of simplices~\cite{fS05}.

Questions about tilings come in many diverse flavors, from problems about existence and enumeration to problems about computational complexity and feasibility, and they have been investigated in connection with many fields, including combinatorial group theory~\cite{CL90}, algebraic geometry~\cite{fB82a}, computational complexity theory \cite{BNRR95}, and stochastic processes~\cite{LRS01}. Furthermore, tiling theory also finds applications to perfect matching~\cite{tC96}, classical geometric problems~\cite{rK96}, genetic~\cite{BPP12}, \emph{etc}. See~\cite{AS10} for a friendly survey on tileability. Here we establish various cryptomorphic characterizations of the matroids $\T_{n,3}$ in terms of tilings of $S_{n,3}$ into unit triangles, rhombi, and trapezoids.

The remainder of this paper is structured as follows. In Section~\ref{sec:preliminary} we provide some background related to matroids and a few notations needed in later sections. Section~\ref{sec:independent sets} is devoted to characterize the independent sets of $\T_{n,3}$. In Section~\ref{sec:rank and tilings}, we study the rank function of $\T_{n,3}$, showing how certain tilings of $S_{n,3}$ into unit rhombi and unit triangles can encode the size and rank of a given subset of the ground set of $\T_{n,3}$. In Section~\ref{sec:rank and circuits}, we provide a characterization of the circuits of $\T_{n,3}$ by tilings $S_{n,3}$ into unit triangles, rhombi and trapezoids. Finally, in Section~\ref{sec:flats}, we give a geometric characterization of the flats of $\T_{n,3}$.

\section{Preliminary} \label{sec:preliminary}

There are many equivalent axiom systems we can use to define a matroid. Following~\cite{jO92}, we define a matroid via independent sets and take the matroid descriptions via bases and circuits as characterizations.

\begin{definition}
	Let $E$ be a finite set, and let $\ii$ be a collection of subsets of $E$. The ordered pair $(E, \ii)$ is called a matroid if the following properties hold: 
	\begin{enumerate}
		\item the collection $\ii$ contains the empty set; \vspace{4pt}
		\item if $I_1 \in \ii$ and $I_2 \subseteq I_1$, then $I_2 \in \ii$; \vspace{3pt}
		\item if $I_1, I_2 \in \ii$ and $|I_1| < |I_2|$, then there exists $x \in I_2 \setminus I_1$ such that $I_1 \cup \{x\} \in \ii$.
	\end{enumerate}
\end{definition}

Property~(3) is known as the \emph{independence augmentation property}. Let $M = (E, \ii)$ be a matroid. The set $E$ is called the \emph{ground set} of $M$. Also, the elements of $\ii$ are called \emph{independent sets} of $M$, while the subsets of $E$ which are not in $\ii$ are called \emph{dependent sets} of $M$. A maximal independent set is called a \emph{basis} of $M$. A matroid can be also characterized by its set of bases.

\begin{theorem} \cite[Theorem~1.2.3]{jO92}
	Let $E$ be a finite set, and let $\bb$ be a collection of subsets of $E$ such that the following conditions hold: \vspace{4pt}
	\begin{enumerate}
		\item the collection $\bb$ is nonempty; \vspace{3pt}
		\item for all $B_1, B_2 \in \bb$ and $x \in B_1 \! \setminus \! B_2$, there exists $y \in B_2 \! \setminus \! B_1$ satisfying that $(B_1 \! \setminus \! \{x\}) \cup \{y\} \in \bb$.
	\end{enumerate}
	If $\ii := \{ I \subseteq E \mid I \subseteq B \ \text{for some} \ B \in \mathcal{B} \}$, then $(E, \ii)$ is a matroid whose collection of bases is precisely $\bb$.
\end{theorem}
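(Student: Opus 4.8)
The plan is to verify that $\ii$ satisfies the three independent‑set axioms and then to check that the maximal members of $\ii$ are exactly the elements of $\bb$. Axioms~(1) and~(2) will be immediate from the definition of $\ii$: since $\bb\neq\emptyset$, picking any $B\in\bb$ gives $\emptyset\subseteq B$, so $\emptyset\in\ii$; and if $I_1\in\ii$ with $I_1\subseteq B$ and $I_2\subseteq I_1$, then $I_2\subseteq B$, so $I_2\in\ii$. All the real content lies in the independence augmentation property~(3), which must be squeezed out of the single exchange axiom on $\bb$.

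Two preliminary facts will do the work. First, every member of $\bb$ has the same cardinality, call it $r$: otherwise one chooses $B_1,B_2\in\bb$ with $|B_1|>|B_2|$ and $|B_1\setminus B_2|$ minimal, takes $x\in B_1\setminus B_2$ (nonempty as $|B_1|>|B_2|$), and applies the exchange axiom to get $y\in B_2\setminus B_1$ with $B_3:=(B_1\setminus\{x\})\cup\{y\}\in\bb$; then $|B_3|=|B_1|>|B_2|$ while $|B_3\setminus B_2|<|B_1\setminus B_2|$, a contradiction. Second --- the key lemma --- for every $I\in\ii$ and every $B\in\bb$ there is $B'\in\bb$ with $I\subseteq B'\subseteq I\cup B$: among the (nonempty, since $I\in\ii$) family of $B'\in\bb$ with $I\subseteq B'$, pick one minimizing $|B'\setminus(I\cup B)|$; if that number were positive, an element $x$ witnessing it would lie in $B'\setminus B$, and exchange would give $y\in B\setminus B'$ with $(B'\setminus\{x\})\cup\{y\}\in\bb$, still containing $I$ but closer to $I\cup B$, contradicting minimality.

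From these, axiom~(3) will follow by a counting argument. Given $I_1,I_2\in\ii$ with $|I_1|<|I_2|$, fix $B_2\in\bb$ with $I_2\subseteq B_2$ and apply the key lemma with $I=I_1$, $B=B_2$ to obtain $B_1\in\bb$ with $I_1\subseteq B_1\subseteq I_1\cup B_2$. If $B_1\cap(I_2\setminus I_1)$ were empty, then $B_1\setminus I_1$ and $I_2\setminus I_1$ would be disjoint subsets of $B_2\setminus I_1$; comparing cardinalities and using $I_1\subseteq B_1$, $I_2\subseteq B_2$, $|B_1|=|B_2|=r$, and $|I_1\cap I_2|\le|I_1\cap B_2|$, this forces $|I_2|\le|I_1|$, a contradiction. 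Hence some $x\in B_1\cap(I_2\setminus I_1)$ exists, and then $I_1\cup\{x\}\subseteq B_1$, so $I_1\cup\{x\}\in\ii$, proving~(3); thus $(E,\ii)$ is a matroid. Finally, its bases will be identified with $\bb$: each $B\in\bb$ lies in $\ii$ and is maximal there, for $B\subsetneq I\in\ii$ would give $B\subsetneq B'$ for some $B'\in\bb$, impossible since $|B|=|B'|$; conversely a maximal $I\in\ii$ satisfies $I\subseteq B$ for some $B\in\bb\subseteq\ii$, forcing $I=B$. I expect the bookkeeping in the key lemma and the final counting inequality to be the only delicate points; everything else is routine.
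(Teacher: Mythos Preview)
The paper does not prove this statement at all: it is quoted verbatim as \cite[Theorem~1.2.3]{jO92} and used only as background in Section~\ref{sec:preliminary}, so there is no ``paper's own proof'' to compare against. Your proposal is a correct, self-contained argument and is essentially the standard textbook proof (equicardinality of members of $\bb$ via a minimal-counterexample exchange, then the ``absorption'' lemma that every $I\in\ii$ can be covered by some $B'\in\bb$ with $B'\subseteq I\cup B$, then a counting step to get augmentation). The counting inequality and the key lemma are carried out correctly; in particular, the chain $|I_2|-|I_1\cap B_2|\le |I_2|-|I_1\cap I_2|\le |I_1|-|I_1\cap B_2|$ is exactly what is needed. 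There is nothing to correct.
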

Any two bases of a matroid $M$ have the same cardinality, which is called \emph{rank} of $M$ and is denoted by $r(M)$. In addition, if $S$ is a subset of $E$, then $(S, \mathcal{I}|S)$, where $\mathcal{I}|S := \{I \subseteq S \mid I \in \mathcal{I}\}$, is a matroid called the \emph{restriction} of $M$ to $S$. The \emph{rank} of $S$, denoted by $r(S)$, is the cardinality of any basis of $(S, \mathcal{I}|S)$.


As mentioned in the introduction, it was poved in~\cite{AB07} that for each $n,d \in \nn$, the pair $\T_{n,d} = (P_{n,d}, \mathcal{I}_{n,d})$ is a matroid.


\begin{definition}
	For $n \in \nn$, we call $\T_{n,3}$ a \emph{tiling matroid} and denote it simply by $\T_n$.
\end{definition}

It is not hard to verify that the matroid $\T_n$ has rank $n$ and its ground set has size $n(n+1)/2$. From now on we let $T_n$ denote the convex hull of $P_{n,3}$ and think of elements in $P_{n,3}$ as triangles in a regular subdivision of $T_n$ as follows. We tacitly assume that $T_n$ is placed as in the top-right picture of Figure~\ref{fig:the 2-simplex of size 4}, namely that $T_n$ is in the plane of the paper and has a horizontal base. In addition, we think of a lattice point $p$ of $P_{n,3}$ as a closed equilateral triangle pointing upward, centered at $p$, whose side length is the minimal distance between lattice points in $T_n$. Finally, we rescale $T_n$ so that the ground set of $\T_n$ consists of unit triangles. This transition from lattice points to unit triangles is illustrated in~Figure~\ref{fig:the 2-simplex of size 4}.

\begin{figure}[h]
	\centering
	\includegraphics[width = 15cm]{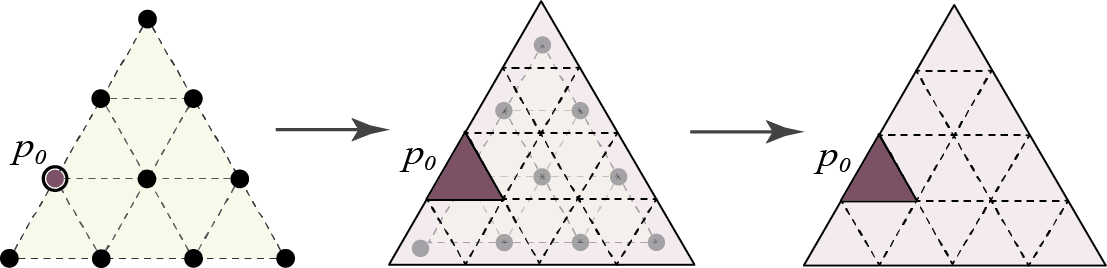}
	\caption{The leftmost picture shows $P_{4,3}$ and an element $p_0 \in P_{4,3}$. The next two pictures show the triangular representation of $p_0$. The rightmost picture in Figure~\ref{fig:the 2-simplex of size 4} illustrates $T_4$ and the unit triangle $p_0$.}
	\label{fig:the 2-simplex of size 4}
\end{figure}

\begin{definition}
	We call the unit triangles representing lattice points of $P_{n,3}$ \emph{unit upward triangles} of $T_n$ and we call the unit triangles inside $T_n \setminus \cup_{\Delta \in P_{n,3}} \Delta$ \emph{unit downward triangles} of $T_n$. Let $\mfu(T_n)$  (resp., $\mfd(T_n)$) denote the set of unit upward (resp., downward) triangles of $T_n$.
\end{definition}

Then $\mfu(T_n)$ is the ground set of $\T_n$. We say that a nonempty subset of $T_n$ is a \emph{lattice region} if its closure is the union of unit triangles of $T_n$. Note that any lattice upward triangle $T$ of $T_n$ is a parallel translate of $T_\ell$ for some $\ell \le n$; in this case we call $\ell$ the \emph{size} of $T$ and set $\text{size}(T) := \ell$. If $A \subseteq T_n$ is a lattice region, then we define
\[
	\mfu(A) := \{X \in \mfu(T_n) \mid X \subseteq A\} \quad \text{ and } \quad \mfd(A) := \{X \in \mfd(T_n) \mid X \subseteq A\}.
\]
On the other hand, given a collection $\mathfrak{s}$ of lattice regions of $T_n$, we set
\[
	A(\mathfrak{s}) := \bigcup_{R \in \mathfrak{s}} R.
\]
The \emph{triangular hull} of $\mathfrak{s}$ is the smallest lattice upward triangle of $T_n$ containing all lattice regions in $\mathfrak{s}$. The concepts in the following two definitions are central in our exposition.

\begin{definition}
	For $\mfs \subseteq \mfu(T_n)$, we call $T_n \setminus A(\mfs)$ the \emph{holey region} corresponding to $\mfs$. 
\end{definition}

\begin{definition}
	For a lattice region $R$ of $T_n$, we call $\mft$ a \emph{tiling} of $R$ provided that $\mft$ consists of closed lattice regions of $T_n$ whose interiors are pairwise disjoint and $\cup_{T \in \mft} T$ equals the closure of $R$.
\end{definition}

Additionally, let us introduce notation for one of the most important lattice regions and tilings we will consider in this paper.

\begin{definition}
	Given a tiling matroid $\T_n$, we call the union of two adjacent unit triangles of $T_n$ a \emph{unit rhombus}. A tiling into unit rhombi of a lattice region $R$ of $T_n$ is called a \emph{lozenge tiling} of $R$.
\end{definition}

Figure~\ref{fig:the three rhombi} illustrates all possible unit rhombi of $T_n$ (up to translation): one vertical and two (symmetric) horizontal. We say that a unit rhombus $R$ of $T_n$ is \emph{horizontal} provided that one of its sides is horizontal; otherwise, we say that $R$ is \emph{vertical}. \\

\begin{figure}[h]
	\centering
	\raisebox{0.5 \height}{\includegraphics[width = 2cm]{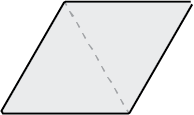}}
	\hspace{1cm}
	\includegraphics[width = 1.4cm]{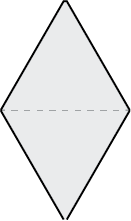}
	\hspace{1cm}
	\raisebox{0.5 \height}{\includegraphics[width = 2cm]{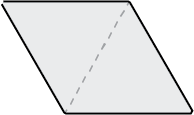}}
	\caption{The three unit rhombi of $T_n$ up to translation.}
	\label{fig:the three rhombi}
\end{figure}
\medskip

A lozenge tiling of the holey region corresponding to a $4$-element subset of $\mfu(T_4)$ is illustrated in the left picture of Figure~\ref{fig:a basis and a non-basis}. The following characterization of the bases of $\T_n$ was established in \cite{AB07}.

\begin{theorem} \cite[Theorem~6.2]{AB07} \label{thm:characterization of bases}
	For the tiling matroid $\T_n$, let $\mathfrak{b}$ be a subset of $\mfu(T_n)$. Then $\mathfrak{b}$ is a basis of $\T_n$ if and only if there exists a lozenge tiling of $T_n \setminus A(\mathfrak{b})$.
\end{theorem}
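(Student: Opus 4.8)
The plan is to deduce this characterization of the bases of $\T_n$ from the forthcoming characterization of independent sets (the main result of Section~\ref{sec:independent sets}), together with a counting argument based on the fact that $\T_n$ has rank $n$ and that $|\mfu(T_n)| = n(n+1)/2$. Since a basis is precisely a maximal independent set and all bases have size $n$, it suffices to show that for $\mfb \subseteq \mfu(T_n)$ with $|\mfb| = n$, the set $\mfb$ is independent if and only if the holey region $T_n \setminus A(\mfb)$ admits a lozenge tiling. First I would record the elementary area bookkeeping: $T_n$ contains $n^2$ unit triangles, of which $n(n+1)/2$ point upward and $n(n-1)/2$ point downward; removing the $n$ upward triangles in $\mfb$ leaves a region with $n(n-1)/2$ upward and $n(n-1)/2$ downward unit triangles, so the holey region has the correct balance of up- and down-triangles for a lozenge tiling to even be possible. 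This matching balance is what makes the equivalence plausible and is the reason $|\mfb| = n$ is forced on both sides.

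For the forward direction, assume $\mfb$ is a basis. The key step is to translate the independent-set condition — that every parallel translate of $P_{k,3}$ inside $T_n$ contains at most $k$ elements of $\mfb$ — into a local obstruction-freeness statement for lozenge tilings. Concretely, for each lattice upward triangle $T$ of size $\ell$ inside $T_n$, the region $T$ contains $\binom{\ell+1}{2}$ upward and $\binom{\ell}{2}$ downward unit triangles; the independence condition says $|\mfb \cap \mfu(T)| \le \ell$, hence the portion of the holey region inside $T$ has at least as many downward triangles as upward triangles, i.e.\ the count of upward triangles not in $\mfb$ inside $T$ is $\binom{\ell+1}{2} - |\mfb\cap\mfu(T)| \ge \binom{\ell+1}{2} - \ell = \binom{\ell}{2}$. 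This is exactly the kind of ``deficiency $\le 0$'' condition that a Hall-type / flow argument needs: I would set up the bipartite adjacency graph between unfilled upward unit triangles and downward unit triangles of $T_n$ (edges being shared edges), and verify Hall's condition on one side using the triangle-by-triangle inequalities above, after checking that the ``neighborhood'' of any set of downward triangles is contained in a union of lattice upward triangles to which the inequality applies. A perfect matching in this graph is precisely a lozenge tiling of $T_n \setminus A(\mfb)$.

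For the converse, assume $T_n \setminus A(\mfb)$ has a lozenge tiling $\mft$; I must show $|\mfb \cap \mfu(T)| \le \operatorname{size}(T)$ for every lattice upward triangle $T$ (which, since $|\mfb| = n$, gives that $\mfb$ is a basis via the independent-set characterization and the rank count). Fix such a $T$ with $\operatorname{size}(T) = \ell$. Each rhombus of $\mft$ that lies entirely inside $T$ covers one upward and one downward unit triangle of $T$; rhombi meeting the boundary of $T$ can only remove a downward triangle of $T$ without a matching upward triangle, or lie outside entirely. Hence the number of upward unit triangles of $T$ covered by $\mft$ is at most the number $\binom{\ell}{2}$ of downward triangles of $T$, so $|\mfb \cap \mfu(T)| = \binom{\ell+1}{2} - (\text{upward triangles of } T \text{ covered by } \mft) \ge \binom{\ell+1}{2} - \binom{\ell}{2} = \ell$ — wait, that inequality points the wrong way; the correct reading is that the uncovered upward triangles of $T$, which are exactly $\mfb\cap\mfu(T)$, number $\binom{\ell+1}{2}$ minus the covered ones, and covered upward triangles are at least $\binom{\ell+1}{2}-\ell$ because at most $\ell$ downward triangles of $T$ fail to be matched inside $T$. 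Making this last counting step precise — tracking exactly how rhombi straddling $\partial T$ behave and why at most $\operatorname{size}(T)$ of them occur — is the main obstacle, and I expect it to hinge on the observation that a lattice upward triangle of size $\ell$ has exactly $\ell$ unit downward triangles along each of its three edges available to be ``borrowed'' by an external rhombus, but a more careful argument (e.g.\ via a height-function or flow interpretation of the lozenge tiling) cleanly yields the bound $\ell$. The remaining bookkeeping — that $|\mfb| = n$ on both sides and that the equivalence is for bases specifically — is then immediate from Theorem-level facts already recorded about the rank and ground-set size of $\T_n$.
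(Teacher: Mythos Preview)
The paper does not prove this theorem at all: it is quoted verbatim from \cite[Theorem~6.2]{AB07} and then \emph{used} as the foundation for the rest of the paper. In particular, your announced plan---to deduce the basis characterization from ``the forthcoming characterization of independent sets (the main result of Section~\ref{sec:independent sets})''---is circular: the proof of Theorem~\ref{thm:characterization of independent sets} invokes Theorem~\ref{thm:characterization of bases} in both directions (to get a lozenge tiling from a basis containing $\mfs$, and to certify that $\mfs\cup\mfs'$ is a basis at the end). You cannot lean on Section~\ref{sec:independent sets} here.

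Fortunately, the argument you actually sketch does not use Theorem~\ref{thm:characterization of independent sets}; it uses only the \emph{definition} of independence (the ``at most $k$ points in every translate of $P_{k,3}$'' condition) together with Hall's theorem and a counting argument. That is a legitimate, self-contained route and is in the spirit of the original proof in \cite{AB07}. But two points need work. First, in the forward direction the entire content is the verification of Hall's condition for the bipartite graph between downward triangles and non-$\mfb$ upward triangles; you assert this will follow ``after checking that the neighborhood of any set of downward triangles is contained in a union of lattice upward triangles to which the inequality applies,'' but that reduction is exactly the nontrivial step and you have not carried it out. An arbitrary set of downward triangles need not have its neighborhood confined to a single lattice upward triangle, so one must argue more carefully (e.g.\ via the saturated-triangle structure, as in \cite{AB07}). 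Second, your converse paragraph contains a visible stumble (``wait, that inequality points the wrong way''). The clean fix is precisely Lemma~\ref{lem:tiles in the border of a triangular region} of this paper: each rhombus covers at least as many upward as downward unit triangles inside any lattice upward triangle $T$, so summing over the tiling gives that the number of upward triangles of $T$ covered by rhombi is at least $|\mfd(T)|=\binom{\ell}{2}$, whence $|\mfb\cap\mfu(T)|\le\binom{\ell+1}{2}-\binom{\ell}{2}=\ell$. Once you state it that way the converse is complete; the forward direction still needs a real proof of Hall's condition.
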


The following example illustrates Theorem~\ref{thm:characterization of bases}.

\begin{example}
	Consider the tiling matroid $\T_4$. Let $\mfb$ consist of the dark unit upward triangles in the left picture of Figure~\ref{fig:a basis and a non-basis}. Note that $\mfb$ is a basis of $\T_4$. A lozenge tiling of the holey region $T_4 \setminus A(\mfs)$ is shown. On the other hand, let $\mfs$ be the set of dark unit upward triangles in the right picture of Figure~\ref{fig:a basis and a non-basis}. One can easily see that the holey region $T_4 \setminus A(\mfs)$ cannot be tiled into unit rhombi.
	\begin{figure}[h]
		\begin{center}
			\includegraphics[width=4cm]{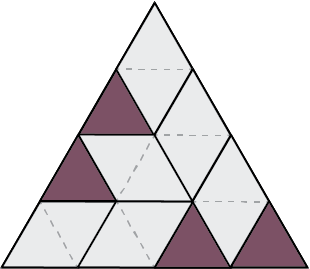}
			\hspace{2cm}
			\includegraphics[width=4cm]{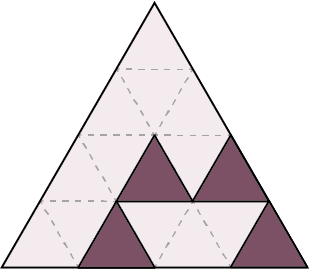}
			\caption{On the left, a basis of $\T_4$. On the right, a size-$4$ subset of $\mfu(T_4)$ that is not a basis of $\T_4$.}
			\label{fig:a basis and a non-basis}
		\end{center}
	\end{figure}
\end{example}


\section{Tiling Characterization of the Independent Sets} \label{sec:independent sets}

In this section, we characterize the independent sets $\mfs$ of the matroid $\T_n$ in terms of certain tilings of $T_n \setminus A(\mfs)$. This characterization generalizes that one of bases given in Theorem~\ref{thm:characterization of bases}.

\begin{definition} \label{def:type-1 trapezoids}
	A \emph{type-1 trapezoid} of $T_n$ is a lattice trapezoid of $T_n$ that is the union of two unit upward triangles and one unit downward triangle. \\
\end{definition}

As in the case of unit rhombi, we say that a type-1 trapezoid $T$ of $T_n$ is \emph{horizontal} if it has its two parallel sides horizontal. Up to translation, there are three type-1 trapezoids, as depicted in Figure~\ref{fig:the three type-1 trapezoids}.

\begin{figure}[h]
	\centering
	\includegraphics[width = 1.8cm]{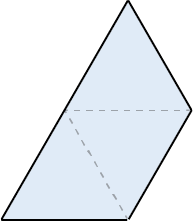}
	\hspace{1cm}
	\raisebox{0.3 \height}{\includegraphics[width = 2.6cm]{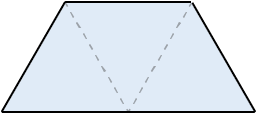}}
	\hspace{1cm}
	\includegraphics[width = 1.8cm]{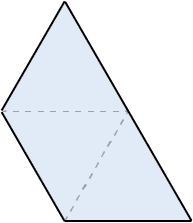}
	\caption{The three type-1 trapezoids up to translation.}
	\label{fig:the three type-1 trapezoids}
\end{figure}

\begin{theorem} \label{thm:characterization of independent sets}
	Let $\mfs$ be a subset of $\mfu(T_n)$. Then $\mfs$ is independent if and only if the lattice region $T_n \setminus A(\mfs)$ can be tiled using unit rhombi and exactly $n - |\mfs|$ type-1 trapezoids.
\end{theorem}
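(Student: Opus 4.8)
The plan is to reduce the statement to the already-established characterization of bases (Theorem~\ref{thm:characterization of bases}) by an extension-to-a-basis argument in both directions, using the type-1 trapezoids as bookkeeping devices that record which elements of a basis lie outside $\mfs$.

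First I would prove the ``only if'' direction. Suppose $\mfs$ is independent. Since $\T_n$ has rank $n$, the independence augmentation property lets us extend $\mfs$ to a basis $\mfb$ with $\mfb \supseteq \mfs$ and $|\mfb| = n$. By Theorem~\ref{thm:characterization of bases}, the holey region $T_n \setminus A(\mfb)$ admits a lozenge tiling $\mft_0$. Now observe that each unit upward triangle $\Delta$ in $\mfb \setminus \mfs$ is a ``hole'' in $T_n \setminus A(\mfs)$ that was filled in $T_n \setminus A(\mfb)$; I want to absorb each such $\Delta$ together with one rhombus of $\mft_0$ adjacent to it into a type-1 trapezoid. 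Concretely, $\Delta$ shares each of its three edges with a unit downward triangle of $T_n$ (or with the boundary of $T_n$, which cannot happen for all three edges simultaneously since a corner upward triangle of $T_n$ still has one interior edge), and that downward triangle is covered in $\mft_0$ by a unit rhombus (its other half being an upward triangle not in $\mfb$, hence in $T_n\setminus A(\mfb)$). Replacing that rhombus by the type-1 trapezoid (rhombus $\cup\, \Delta$) removes $\Delta$ from the uncovered set and turns one rhombus into one trapezoid. The subtlety is that two distinct triangles of $\mfb\setminus\mfs$ could compete for the same adjacent rhombus; I would handle this by choosing, for each $\Delta$, the rhombus lying on a fixed preferred side (say, processing holes in a sweep order), or more cleanly by noting that the rhombi of $\mft_0$ adjacent to distinct elements of $\mfb$ can be chosen disjointly because the elements of $\mfb$ are themselves an independent set and hence ``spread out''. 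After doing this for all $n - |\mfs|$ triangles of $\mfb\setminus\mfs$, the result is a tiling of $T_n \setminus A(\mfs)$ by unit rhombi together with exactly $n - |\mfs|$ type-1 trapezoids, as desired.

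Next I would prove the ``if'' direction. Suppose $T_n \setminus A(\mfs)$ has a tiling $\mft$ using unit rhombi and exactly $m := n - |\mfs|$ type-1 trapezoids. Each type-1 trapezoid is the union of one unit downward triangle and two unit upward triangles; pick one of its two upward triangles, call it $\Delta_i$ for the $i$-th trapezoid, so that the complementary piece (the trapezoid minus $\Delta_i$) is a unit rhombus. Set $\mfb := \mfs \cup \{\Delta_1, \dots, \Delta_m\}$. Then $T_n \setminus A(\mfb)$ is obtained from $T_n \setminus A(\mfs)$ by deleting the $\Delta_i$'s, and replacing each trapezoid of $\mft$ by its residual rhombus yields a genuine lozenge tiling of $T_n \setminus A(\mfb)$; note $|\mfb| = |\mfs| + m = n$ provided the $\Delta_i$ are distinct and disjoint from $\mfs$, which holds because the trapezoids and the triangles of $\mfs$ have pairwise disjoint interiors. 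By Theorem~\ref{thm:characterization of bases}, $\mfb$ is a basis of $\T_n$, and since $\mfs \subseteq \mfb$, condition~(2) in the definition of a matroid gives that $\mfs$ is independent.

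I expect the main obstacle to be the disjointness/choice issue in the ``only if'' direction: ensuring that the rhombi of the lozenge tiling of $T_n\setminus A(\mfb)$ that we fuse onto the holes $\Delta\in\mfb\setminus\mfs$ can be selected so that no rhombus is used twice and every $\Delta$ indeed has an adjacent rhombus (rather than being adjacent only to the boundary or only to other holes). Resolving this cleanly will likely require a lemma — or a direct combinatorial argument using a sweep/orientation on $T_n$ — showing that in a lozenge tiling one can orient each rhombus toward a distinguished adjacent upward triangle in an injective way; a counting check ($|\mfd(T_n\setminus A(\mfb))| = |\mfu(T_n\setminus A(\mfb))|$ and each hole contributing exactly one downward-triangle mismatch) should make the bijection transparent. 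The remaining steps are routine bookkeeping with tilings.
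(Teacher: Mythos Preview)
Your ``if'' direction is correct and essentially identical to the paper's: split each type-1 trapezoid into a rhombus and an upward triangle, add those upward triangles to $\mfs$ to get a set $\mfb$ of size $n$, and invoke Theorem~\ref{thm:characterization of bases}.

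In the ``only if'' direction you have the right opening move (extend $\mfs$ to a basis $\mfb$, take a lozenge tiling $\mft_0$ of $T_n\setminus A(\mfb)$, and try to fuse each $\Delta\in\mfb\setminus\mfs$ with an adjacent rhombus), and you correctly flag the disjointness issue as the crux. But the fixes you propose do not work: an injective assignment of adjacent rhombi to the holes need not exist for a \emph{fixed} lozenge tiling. Take $n=3$, $\mfs=\emptyset$, and $\mfb$ the three bottom-row upward triangles. The lozenge tiling of $T_3\setminus A(\mfb)$ is forced (each bottom downward triangle pairs with the upward triangle above it, and the middle downward triangle with the apex). The two corner holes each have a \emph{single} adjacent rhombus, and the middle hole is adjacent only to those same two rhombi; Hall's condition fails ($3$ holes, neighborhood of size $2$). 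So neither a ``preferred side'' rule, nor a sweep, nor the vague ``spread out because independent'' claim, nor the counting check you sketch can produce the injection --- you must \emph{re-tile}.

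This is precisely what the paper does: among all tilings of $T_n\setminus A(\mfs)$ by rhombi, unmerged upward triangles from $\mfb\setminus\mfs$, and type-1 trapezoids, take one maximizing the number $m$ of trapezoids, and argue by contradiction that $m=n-|\mfs|$. The contradiction requires a genuine case analysis (an unmerged hole either sits in the interior and is surrounded by rhombi already claimed by other holes, in which case a local size-$3$ triangle can be re-tiled into three trapezoids; or it lies on the bottom edge, possibly in a run of consecutive holes, where a sliding argument frees up a rhombus). Your proposal is missing exactly this re-tiling argument; without it the forward direction is incomplete.
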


\begin{proof}
	To prove the direct implication, suppose that $\mfs$ is an independent set of $\T_n$. If $|\mfs| = n$, then $\mfs$ is a basis of $\T_n$, and we are done by Theorem~\ref{thm:characterization of bases}. So we assume that $|\mfs| < n$. Take a basis $\mfb$ of $\T_n$ containing $\mfs$. Theorem~\ref{thm:characterization of bases} ensures the existence of a lozenge tiling of $T_n \setminus A(\mfb)$. Let $\mft$ be one of such tilings. Merging some of the unit upward triangles in $\mfb \setminus \mfs$ with some of their adjacent rhombi, we create new tilings $\mft'$ of $T_n \setminus A(\mfs)$ consisting of $m$ type-1 trapezoids ($m \le n - |\mfs|$), some unit rhombi, and some unit upward triangles. Among all such tilings, let $\mft'$ be one maximizing $m$ and, suppose, by way of contradiction, that $m < n - |\mfs|$. Then there is a unit upward triangle $X \in \mfb \setminus \mfs$ that did not merge to any unit rhombus of $\mft$. By rotating $T_n$ if necessary, we can assume that $X$ is adjacent to a vertical unit rhombus in $\mft$. Now we consider two cases.
	
	CASE 1: The horizontal side $a$ of $X$ is not in the horizontal border of $T_n$. The fact that $X$ is adjacent to a vertical unit rhombus in $\mft$ forces $n \ge 3$. Let $R$ denote one of the vertical unit rhombus adjacent to $X$ in $\mft$. Because $X$ did not merge in $\mft'$, another unit upward triangle $Y \in \mfb \setminus \mfs$ merged to $R$ in $\mft'$ creating a type-1 trapezoid. Assume, without loss of generality, that $Y$ is right after $X$ in the same row. In this case, $a$ is a side of the horizontal unit rhombus $R'$ right below $X$ in $\mft$. As $X$ did not merge in $\mft'$, another unit upward triangle $Z \in \mfb \setminus \mfs$ merged to $R'$ in $\mft'$ creating a type-1 trapezoid. Note that $X$, $Y$, and $Z$ belong to the same size-$3$ lattice upward triangle of $T_n$, which can be re-tiled by using exactly three type-1 trapezoids in such a way that no two triangles in $\{X,Y,Z\}$ are part of the same trapezoid. But this yields a tiling of $T_n \setminus A(\mfs)$ containing more type-$1$ trapezoids than $\mft'$ does, which contradicts the maximality of $m$.
		
	CASE 2: The horizontal side $a$ of $X$ is in the horizontal border of $T_n$. As the cases when $n < 3$ follow straightforwardly, we assume $n \ge 3$. Let $\mathfrak{x} = \{X_1, \dots, X_t\}$ be the maximal set of consecutive unit upward triangles in the bottom row of $T_n$ such that $X \in \mathfrak{x} \subseteq \mfb \setminus \mfs$. Notice that if $t=1$, then the maximality of $\mathfrak{x}$ would make one of the vertical unit rhombi adjacent to $X$ in $\mft$ available to merge with $X$ to form a tiling of $T_n \setminus A(\mfs)$ containing more type-1 trapezoids than $\mft'$ does, which is a contradiction. Hence we also assume that $t > 1$.
	
	CASE 2.1: $X \in \{X_1, X_t\}$. Assume, without loss of generality, that $X = X_1$. We first suppose that $X$ is not a corner of $T_n$. Then the unit rhombus $R$ adjacent from the left to $X$ in $\mft$ must be horizontal by the maximality of $\mathfrak{x}$. In addition, as $X$ did not merge in $\mft'$, the unit rhombus $R$ must have merged in $\mft'$ with the unit upward triangle $Y$ right on top of it. Also, note the rhombus $R'$ adjacent from the right to $X$ in $\mft$ is vertical and must have merged to $X_2$ in $\mft'$ (by the maximality of $m$). Now the size-$3$ lattice upward triangle containing $X$, $X_2$, and $Y$ can be re-tiled into three type-1 trapezoids such that not two triangles in $\{X,X_2,Y\}$ are part of the same trapezoid. However, this results in a new tiling of $T_n \setminus A(\mfs)$ having more type-$1$ trapezoids than $\mft'$ does, which is a contradiction.
	
	On the other hand, suppose that $X = X_1$ is a corner of $T_n$. It follows from $|\mfs| \ge 1$ that $t < n$. As a result, $X_t$ cannot be a corner of $T_n$. Notice that for every $i \in [t-1]$ the common adjacent tile to $X_i$ and $X_{i+1}$ in $\mft$ is a vertical unit rhombus. If for some $i \in [t] \setminus \{1\}$ the triangle $X_i$ did not merge with its left adjacent rhombus in $\mft'$, then we can re-tile $T_n \setminus A(\mfs)$ by merging $X_j$ to its right adjacent vertical unit rhombus for all $j \in [i-1]$ to create a tiling of $T_n \setminus A(\mfs)$ containing more type-$1$ trapezoids than $\mft'$ does, which is not possible. Then we can assume that $X_t$ merges in $\mft'$ to its left vertical unit rhombus. By the maximality of $\mathfrak{x}$, the unit rhombus $R''$ adjacent from the right to $X_t$ in $\mft$ must be horizontal. If $R''$ does not merge in $\mft'$, then we can re-tile $T \setminus A(\mfs)$ by merging each of the $X_i$ to its right adjacent unit rhombus, obtaining once again a tiling of $T_n \setminus A(\mfs)$ with more than $m$ type-$1$ trapezoids. Then suppose that $R''$ merges in $\mft'$ to the unit upward triangle $Z$ right on top of it. In this case, we can merge $X_t$ with $R''$, $Z$ with its top-left vertical unit rhombus, and $X_i$ with its top-right unit rhombus for each $i \in [t-1]$ to obtain a tiling of $T_n \setminus A(\mfs)$ containing more type-1 trapezoids than $\mft'$ does. However, this contradicts the maximality of $m$.
	
	CASE 2.2: $X \notin \{X_1, X_t\}$. Let $X = X_j$. Because $t < n$, either $X_1$ or $X_t$ is not a corner of $T_n$. Suppose, without loss of generality, that $X_t$ is not a corner of $T_n$. In this case, we can proceed as we did in the second paragraph of CASE~2.1 to obtain a new tiling of $T_n \setminus A(\mfs)$ having more than $m$ type-1 trapezoids, which once again contradicts the maximality of $m$.
	
	To check the reverse implication, suppose that $\mft$ is a tiling of the region $T_n \setminus A(\mfs)$ consisting of unit rhombi and $n - |\mfs|$ type-1 trapezoids. Now split each type-1 trapezoid of $\mft$ into a unit rhombus and a unit upward triangle, and let $\mfs'$ denote the set of all unit upward triangle resulting from such splittings. Then we have a tiling $\mft'$ of $T_n \setminus A(\mfs \cup \mfs')$ using only unit rhombi. By Theorem~\ref{thm:characterization of bases}, the set $\mfs \cup \mfs'$ is a basis of $\T_n$. Hence $\mfs$ is an independent set of $\T_n$, which completes the proof.
\end{proof}

The following example illustrates the characterization established in Theorem~\ref{thm:characterization of independent sets}.

\begin{example}
	Consider the tiling matroid $\T_4$. The left picture of Figure~\ref{fig:independent and non-independent examples} shows an independent set $\mfs$ of $\T_4$ whose elements are depicted by the three dark unit upward triangles. It also shows a tiling of the holey region corresponding to $\mfs$ into unit rhombi and a type-$1$ trapezoid. By contrast, the right picture of Figure~\ref{fig:independent and non-independent examples} illustrates a dependent set $\mfs'$ of $\T_4$. Notice that the holey region corresponding to $\mfs'$ cannot be tiled into unit rhombi and type-$1$ trapezoids as it consists of more unit downward triangles than unit upward triangles.
	\begin{figure}[h]
		\begin{center}
			\includegraphics[width=4cm]{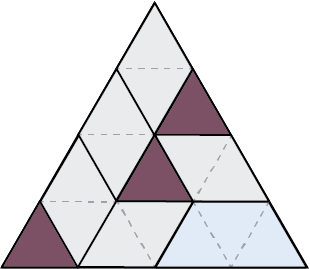}
			\hspace{2cm}
			\includegraphics[width=4cm]{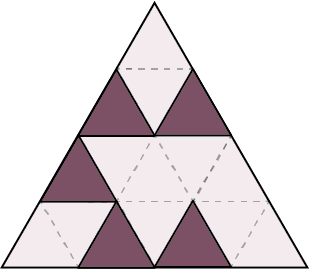}
			\caption{On the left, an independent set of $\T_4$. On the right, a dependent set of $\T_4$.}
			\label{fig:independent and non-independent examples}
		\end{center}
	\end{figure}
\end{example}

\section{Rank and Tilings} \label{sec:rank and tilings}

Given a subset $\mfs$ of $\mfu(T_n)$, we now study how the size and rank of $\mfs$ can be used to count the number of congruent pieces of any tiling of $T_n \setminus A(\mfs)$ into unit triangles and unit rhombi that maximizes the number of rhombi.

Let $R$ be a lattice region of $T_n$, and let $\mft$ be a tiling of $R$ consisting of unit triangles and unit rhombi. We call a tiling $\mft'$ of $R$ a \emph{reconfiguration} of $\mft$ provided that $\mft'$ contains the same numbers of unit upward triangles, unit downward triangles, and unit rhombi that $\mft$ does. Before presenting the main result of this section, let us collect the following lemma.

\begin{lemma} \label{lem:pushing up a downward triangle}
	Let $\mfs$ be a subset of $\mfu(T_n)$, and let $\mft$ be a tiling of $T_n \setminus A(\mfs)$ into unit rhombi and unit triangles. Then there exists a reconfiguration of $\mft$ all whose unit downward triangles are adjacent from below to unit upward triangles.
\end{lemma}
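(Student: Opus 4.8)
The plan is to prove the lemma by a "height" argument: associate to each tiling $\mft$ of $T_n \setminus A(\mfs)$ into unit rhombi and unit triangles a nonnegative integer potential that strictly decreases under a local move which pushes a misplaced unit downward triangle upward, and then argue that the move is always available as long as the conclusion fails. First I would set up coordinates so that each unit triangle of $T_n$ sits in one of $n$ horizontal rows, numbered $1$ (bottom) through $n$ (top), and define the potential of $\mft$ to be $\Phi(\mft) := \sum_{\nabla} (\text{row index of } \nabla)$, the sum taken over all unit downward triangles $\nabla$ appearing as tiles of $\mft$. Since a reconfiguration preserves the number of unit downward triangles, and each such triangle has row index between $2$ and $n$, the quantity $\Phi$ is a bounded nonnegative integer on the (finite, nonempty) set of reconfigurations of $\mft$; hence a reconfiguration $\mft^\ast$ minimizing $\Phi$ exists. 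I claim $\mft^\ast$ has the desired property.

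The heart of the argument is the local move. Suppose, for contradiction, that some unit downward triangle $\nabla$ of $\mft^\ast$ is \emph{not} adjacent from below to a unit upward triangle that is a tile of $\mft^\ast$; among all such $\nabla$ pick one in the lowest possible row $i$ (necessarily $i \ge 2$). The upward triangle $X$ directly above $\nabla$ (sharing $\nabla$'s horizontal edge) exists because $i \ge 2$, and since $X \notin \mft^\ast$ as a standalone tile, $X$ is covered either by a rhombus or by $\nabla$ itself — but $\nabla$ points downward and lies below $X$, so $\nabla$ does not cover $X$; thus $X$ is part of a horizontal unit rhombus $R$ whose lower-left or lower-right triangle is $X$... wait — more carefully, the tile of $\mft^\ast$ containing $X$ is a rhombus $R = X \cup Y$ where $Y$ is one of the three triangles edge-adjacent to $X$. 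If $R$ is the vertical rhombus $X \cup \nabla$ then in fact $\nabla$ \emph{is} glued to $X$, contradicting our choice; so $Y$ is the downward triangle to the upper-left or upper-right of $X$, i.e. $R$ is a horizontal rhombus sitting in rows $i$ and $i+1$. Now the triangle $\nabla$ together with $X$ forms the vertical rhombus $R'$, and I would perform the swap $\{\,\nabla,\, R = X\cup Y\,\} \longmapsto \{\, R' = X \cup \nabla,\, Y' \,\}$, where $Y'$ is the single upward triangle left over in row $i+1$; concretely, the size-$2$ upward triangle occupied by $\nabla, X, Y$ and one more triangle gets re-tiled so that the leftover downward triangle moves from row $i$ to row $i+1$. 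This replaces one downward triangle of row $i$ by one of row $i+1$ and leaves all other tiles (and all tile-counts) unchanged, so the result is a reconfiguration with strictly larger...

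I need the move to \emph{decrease} $\Phi$, so I will instead run the minimization in the opposite direction, or better: I will argue directly that in a $\Phi$-\emph{minimal} reconfiguration no such bad $\nabla$ can occur, because the move just described, read in reverse, shows any bad configuration is obtained from one of smaller potential — more precisely, if $\nabla$ in row $i$ is bad, the reverse swap pulls it down to row $i-1$ while keeping tile-counts fixed, producing a reconfiguration of smaller $\Phi$, contradicting minimality. The one case needing care is when $i = 2$: then row $i-1 = 1$ is the bottom row, which contains no downward triangles, so "pulling $\nabla$ down" is impossible — but in that case I must show a bad $\nabla$ in row $2$ simply cannot exist in \emph{any} tiling, which follows because the triangle $X$ below–adjacent... above-adjacent to $\nabla$, if not a free tile, lies in a horizontal rhombus straddling rows $1$ and $2$, and one checks the two triangles of row $1$ involved cannot both be covered consistently; alternatively, and more robustly, I would simply define the potential with weights decreasing in the row index, $\Phi(\mft) := \sum_\nabla (n - \text{row}(\nabla))$, so that every bad $\nabla$ can be pushed \emph{up} (row index increases, $\Phi$ decreases), the push is always geometrically available by the rhombus analysis above, and no boundary case arises because row $n$ contains no upward tiles above it so a bad $\nabla$ can never sit in row $n$ either — wait, $\nabla$ is never in row... let me not belabor this here.

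The key steps, in order: (1) fix the row coordinates and define the integer potential $\Phi$ weighting each downward tile by $n-\mathrm{row}(\nabla)$; (2) observe $\Phi \ge 0$, that the reconfiguration class of $\mft$ is finite and nonempty, and pick a $\Phi$-minimal reconfiguration $\mft^\ast$; (3) assuming a unit downward triangle $\nabla$ of $\mft^\ast$ is not glued below to an upward tile, analyze the tile covering the upward triangle $X$ directly above $\nabla$ and show it must be a horizontal unit rhombus; (4) re-tile the size-$2$ upward triangle spanned by $X$ and $\nabla$ to exhibit a reconfiguration $\mft^{\ast\ast}$ with one downward triangle pushed up one row, hence $\Phi(\mft^{\ast\ast}) < \Phi(\mft^\ast)$, a contradiction. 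The step I expect to be the genuine obstacle is step (3)–(4): one must verify that the local re-tiling never disturbs tiles outside the size-$2$ triangle and genuinely preserves all three tile-counts simultaneously, handling the left/right mirror cases and making sure the rhombus $R$ containing $X$ is oriented as claimed; everything else is bookkeeping.
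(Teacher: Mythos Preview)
Your final plan (steps (1)--(4)) is correct and is essentially the paper's own argument: both select an extremal reconfiguration and use the identical local move---swap a ``bad'' standalone $\nabla$ with the horizontal rhombus covering the upward cell directly above it, pushing the lone downward tile one row higher---to contradict extremality. The paper maximizes the count $N$ of ``good'' downward tiles and therefore must iterate the move all the way to the apex while checking that no other downward tile loses goodness along the way, whereas your height potential $\Phi = \sum_\nabla (n - \mathrm{row}(\nabla))$ strictly drops after a single swap, giving the contradiction immediately; this is a mild streamlining of the same idea rather than a different approach (one small cleanup: in your case analysis remember that the upward cell $X$ above $\nabla$ could lie in $\mfs$, in which case $\nabla$ is already good, so in the bad case $X$ is genuinely covered by a tile and your horizontal-rhombus conclusion goes through).
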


\begin{proof}
	Among all reconfigurations of $\mft$, let $\mft'$ be one maximizing the number $N$ of unit downward triangles that are adjacent from below to unit upward triangles. Let $\mfd$ and $\mfu$ be the sets of unit downward triangles and unit upward triangles in $\mft'$, respectively. Suppose, by way of contradiction, that there exists $X \in \mfd$ that is not adjacent from below to any unit upward triangle. Then $X$ must be adjacent from below to a horizontal unit rhombus of $\mft'$ and, therefore, we can move $X$ one unit row up in $T_n$ by using the moves illustrated in Figure~\ref{fig:shifting downward triangle and rhombus}, obtaining a reconfiguration $\mft''$ of $\mft'$.
	\vspace{5pt}
	\begin{figure}[h]
		\centering
		\includegraphics[width = 1.8cm]{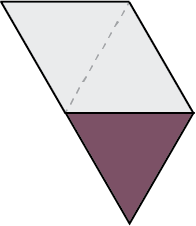}
		\hspace{0.1cm}
		\raisebox{3.4 \height}{\includegraphics[width = 1.2cm]{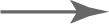}}
		\hspace{-0.1cm}
		\includegraphics[width = 1.8cm]{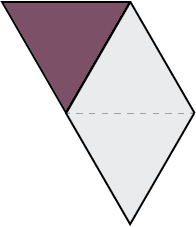}
		\hspace{2.8cm}
		\includegraphics[width = 1.8cm]{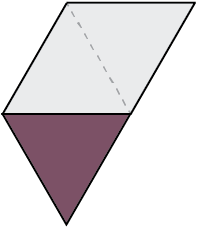}
		\hspace{-0.1cm}
		\raisebox{3.4 \height}{\includegraphics[width = 1.2cm]{MoveArrow}}
		\hspace{0.1cm}
		\includegraphics[width = 1.8cm]{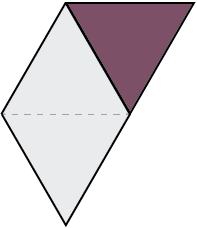}
		\caption{}
		\label{fig:shifting downward triangle and rhombus}
	\end{figure}
	Notice that the triangles in $\mfd \setminus \{X\}$ that were adjacent from below in $\mft'$ to a unit rhombus (or a unit upward triangle) keep this property in $\mft''$. As $N$ is maximal, $X$ is still adjacent from below in $\mft''$ to a unit rhombus. Then we can actually move between reconfigurations of $\mft'$ by performing the move in Figure~\ref{fig:shifting downward triangle and rhombus} to $X$ until it reaches the second unit row of $T_n$ (from top to bottom), obtaining a final reconfiguration of $\mft'$ where $X$ is adjacent from below to the top unit triangle of $T_n$. However, this contradicts the maximality of $N$. Hence each unit downward triangle in $\mft'$ must be adjacent from below to a unit upward triangle, and the proof follows.
\end{proof}

The next result establishes a relation between the size and rank of subsets $\mfs$ of $\mfu(T_n)$ and certain tilings of $T_n \setminus A(\mfs)$ into unit triangles and unit rhombi.

\begin{prop} \label{prop:rank characterization}
	If $\mfs \subseteq \mfu(T_n)$, then a tiling of $T_n \setminus A(\mfs)$ into unit rhombi and unit triangles with maximum number of rhombi must contain $|\mfs| - r(\mfs)$ unit downward triangles.
\end{prop}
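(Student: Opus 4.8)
The plan is to start from the characterization of independent sets in Theorem~\ref{thm:characterization of independent sets} and then pass from independent sets to arbitrary subsets by a counting argument on unit downward triangles. First I would record the basic area identity: since $T_n$ is subdivided into $n^2$ unit triangles with $\binom{n+1}{2}$ of them upward and $\binom{n}{2}$ of them downward, any lattice region $T_n \setminus A(\mfs)$ contains $\binom{n+1}{2} - |\mfs|$ unit upward triangles and $\binom{n}{2}$ unit downward triangles. Now suppose $\mft$ is a tiling of $T_n \setminus A(\mfs)$ into unit rhombi and unit triangles, say with $u$ unit upward triangles, $w$ unit downward triangles, and $\rho$ unit rhombi. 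Each rhombus uses one upward and one downward triangle, so counting triangles of each orientation gives $u + \rho = \binom{n+1}{2} - |\mfs|$ and $w + \rho = \binom{n}{2}$; subtracting, $u - w = n - |\mfs|$, i.e. $u = w + (n - |\mfs|)$. Thus maximizing $\rho$ is equivalent to minimizing $u$ (equivalently minimizing $w$), and the claim reduces to showing that the minimum possible number of unit downward triangles over all such tilings equals $|\mfs| - r(\mfs)$.

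The upper bound $w \le |\mfs| - r(\mfs)$ (i.e. the existence of a tiling with exactly that many downward triangles) comes directly from Theorem~\ref{thm:characterization of independent sets}: let $\mfi \subseteq \mfs$ be an independent subset of maximum size, so $|\mfi| = r(\mfs)$. Then $T_n \setminus A(\mfi)$ can be tiled by unit rhombi and exactly $n - |\mfi| = n - r(\mfs)$ type-1 trapezoids; splitting each type-1 trapezoid into a unit rhombus and a unit upward triangle produces a lozenge tiling of $T_n \setminus A(\mfi)$, hence (removing the $|\mfs| - |\mfi|$ triangles of $\mfs \setminus \mfi$ from the tiling, each of which was a unit upward triangle either standing alone or inside a split trapezoid — more carefully, one re-runs the merging argument) a tiling of $T_n \setminus A(\mfs)$ into unit rhombi, $n - r(\mfs)$ type-1 trapezoids, and $(|\mfs| - r(\mfs)) - (n - r(\mfs)) = |\mfs| - n$ \dots{} I need to be a bit careful here: the cleanest route is to take the tiling of $T_n \setminus A(\mfi)$ by rhombi and $n - r(\mfs)$ type-1 trapezoids, then for the $|\mfs| - r(\mfs)$ elements of $\mfs \setminus \mfi$ that sit inside this region, merge each into an adjacent trapezoid or rhombus; after splitting what remains, one is left with a tiling of $T_n \setminus A(\mfs)$ whose downward-triangle count is exactly the number of type-1 trapezoids that failed to absorb an element of $\mfs\setminus\mfi$. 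By a maximality argument identical in spirit to the proof of Theorem~\ref{thm:characterization of independent sets} one arranges every element of $\mfs \setminus \mfi$ to be absorbed, leaving $n - r(\mfs) - (|\mfs| - r(\mfs)) = n - |\mfs|$ type-1 trapezoids, hence (splitting them) a tiling of $T_n\setminus A(\mfs)$ with $n - |\mfs|$ unit upward triangles and $0$ downward triangles when $|\mfs| \le n$; then deleting $|\mfs| - r(\mfs)$ of the absorbed mergers in the opposite direction yields exactly $|\mfs| - r(\mfs)$ downward triangles. The robust statement to isolate as a sub-lemma is: \emph{there is a tiling of $T_n \setminus A(\mfs)$ into unit rhombi and exactly $|\mfs| - r(\mfs)$ unit triangles of each orientation plus $n - |\mfs|$ extra unit upward triangles} — equivalently $w = |\mfs| - r(\mfs)$, which the area identity then forces for $u$.

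For the lower bound $w \ge |\mfs| - r(\mfs)$, suppose $\mft$ is any tiling of $T_n \setminus A(\mfs)$ into unit rhombi and unit triangles with $w$ downward triangles. By Lemma~\ref{lem:pushing up a downward triangle} I may assume every unit downward triangle of $\mft$ is adjacent from below to a unit upward triangle of $\mft$; pairing each such downward triangle with the upward triangle above it merges it into a unit rhombus, turning $\mft$ into a lozenge-and-upward-triangle tiling. Concretely, after this reconfiguration the $w$ downward triangles get absorbed, and we obtain a tiling of $T_n \setminus A(\mfs')$ by unit rhombi alone, where $\mfs' = \mfs \sqcup \mfc$ for some set $\mfc$ of $|\mfs| - n + \dots$ unit upward triangles — again the area count pins down $|\mfs'| = n$, so $|\mfc| = n - |\mfs| + (\text{the } u - w \text{ leftover upward triangles}) = $ the $u$ upward triangles of $\mft$ that were not pushed-to below a downward triangle, and $|\mfc| = u - w$? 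No: after merging each of the $w$ downward triangles upward, the remaining unit upward triangles number $u - w$, and the region they live in together with $A(\mfs)$ is tiled by rhombi, so $\mfs \cup (\text{those } u - w \text{ triangles})$ has size $n$ by Theorem~\ref{thm:characterization of bases}, giving $u - w = n - |\mfs|$ (consistent with the area identity) and, crucially, $\mfs$ together with those $u - w$ triangles is a basis, hence \emph{independent}. Any subset of $\mfs$ inside an independent set is independent, so $r(\mfs) \ge$ (the rank of $\mfs$ as a subset of a basis). More precisely: $\mfs$ is contained in an independent set of size $n$, but that only says $r(\mfs) \le |\mfs|$; what I actually need is that an independent \emph{superset} of $\mfs$ of size $n$ exists that uses only $u - w$ new elements, which by the exchange property shows $r(\mfs) \ge n - (u - w) = |\mfs| - w + (\dots)$ — hmm. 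The correct deduction: since $\mfs \cup \mfc$ is independent with $|\mfc| = u - w$, every subset is independent, and in particular $r(\mfs \cup \mfc) = n$; by submodularity / the definition of rank, $r(\mfs) \ge r(\mfs \cup \mfc) - |\mfc| = n - (u - w) = n - (n - |\mfs|) \dots$ wait that gives $r(\mfs) \ge |\mfs|$, which is false in general — the gap is that $u - w = n - |\mfs|$ only if $\mfs \cup \mfc$ is a basis, which requires $\mfc$ to not overlap $\mfs$ and to be independent over $\mfs$; the number of downward triangles $w$ is exactly $|\mfs| - |\mfc'|$ where $\mfc'$ is a maximal independent subset, and one sees $w = |\mfs| - r(\mfs)$ because the pushing-up procedure can only fail to absorb a downward triangle when the upward triangle above it is "forced", matching the rank defect. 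I will make this precise by choosing $\mft$ with $w$ \emph{minimal}, applying Lemma~\ref{lem:pushing up a downward triangle}, and arguing that the $u - w$ leftover upward triangles, together with $\mfs$, form an independent set whose intersection with $\mfs$ has full size $|\mfs|$ only in the trivial case — so that the leftover triangles extend a maximal independent subset $\mfi \subseteq \mfs$ to a basis, forcing $|\mfs| - w = |\mfi| = r(\mfs)$.

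The main obstacle is precisely the lower bound: showing that no tiling can do better than $|\mfs| - r(\mfs)$ downward triangles. Lemma~\ref{lem:pushing up a downward triangle} reduces an arbitrary tiling to one where downward triangles are "stacked" under upward triangles and can be absorbed into rhombi, but one then has to argue that the \emph{upward} triangles that remain cannot be fewer than $|\mfs| - r(\mfs)$ above the forced minimum $n - |\mfs|$ — equivalently that $\mfs$ plus any valid set of leftover upward triangles completing to a lozenge tiling must involve at least $|\mfs| - r(\mfs)$ triangles of $\mfs$ being "absorbed" rather than kept, which is exactly the statement that the largest independent subset of $\mfs$ has size $r(\mfs)$. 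Pinning down this correspondence — between leftover upward triangles in an optimal tiling and a basis extending a maximal independent subset of $\mfs$ — via the exchange axiom and Theorem~\ref{thm:characterization of bases} is the crux; the area identity and Lemma~\ref{lem:pushing up a downward triangle} are the easy inputs that set it up.
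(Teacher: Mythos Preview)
Your counting identity and overall plan are right, but the lower bound has a real gap. After applying Lemma~\ref{lem:pushing up a downward triangle} you attempt to merge each downward triangle of $\mft$ with ``the upward triangle above it,'' treating that triangle as a \emph{piece of the tiling} (an element of $\mfu$). This is the wrong direction. Lemma~\ref{lem:pushing up a downward triangle} only guarantees that the upward position above each $X \in \mfd$ is not covered by a rhombus, so it lies in $\mfs \cup \mfu$; the minimality of $w$ then \emph{rules out} $\mfu$ (otherwise merging $X$ with that triangle would strictly reduce $w$), forcing the triangle above $X$ to be a \emph{hole} in $\mfs$. Merging therefore fills $|\mfd|$ of the holes, yielding a lozenge tiling of $T_n \setminus A(\mfs'' \cup \mfu)$ where $\mfs'' \subseteq \mfs$ has $|\mfs| - w$ elements. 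By Theorem~\ref{thm:characterization of bases} the set $\mfs'' \cup \mfu$ is a basis, so $\mfs''$ is an independent \emph{subset of} $\mfs$, giving $r(\mfs) \ge |\mfs''| = |\mfs| - w$. Your attempt to merge into $\mfu$ instead led you (as you noticed) to the absurd conclusion that $\mfs$ itself sits inside a basis; the fix is not more bookkeeping but merging into $\mfs$ rather than into $\mfu$.

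For the upper bound, your detour through Theorem~\ref{thm:characterization of independent sets} can be made to work but is needlessly tangled, and you do not finish it. The direct route is the paper's: extend a maximal independent $\mfi \subseteq \mfs$ to a basis $\mfb$ and lozenge-tile $T_n \setminus A(\mfb)$. Maximality of $\mfi$ in $\mfs$ forces $(\mfs \setminus \mfi) \cap \mfb = \emptyset$, so each element of $\mfs \setminus \mfi$ is covered by a rhombus of the tiling; splitting those $|\mfs| - r(\mfs)$ rhombi produces exactly the required downward triangles in one step.
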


\begin{proof}
	Let us first argue that there exists a tiling of $T_n \setminus A(\mfs)$ into unit rhombi and unit triangles having exactly $|\mfs| - r(\mfs)$ unit downward triangles. Let $\mfr$ be an independent set of $\T_n$ contained in $\mfs$ such that $|\mfr| = r(\mfs)$. Now take a basis $\mfb$ of $\T_n$ containing $\mathfrak{r}$. As any subset of $\mfb$ is an independent set of $\T_n$, the sets $\mfb \setminus \mathfrak{r}$ and $\mfs \setminus \mathfrak{r}$ are disjoint. By Theorem~\ref{thm:characterization of bases}, there exists a lozenge tiling $\mft$ of $T_n \setminus A(\mfb)$. This, in turns, gives us a tiling $\mft'$ of $T_n \setminus A(\mathfrak{r})$ consisting of unit rhombi and the unit upward triangles in $\mfb \setminus \mathfrak{r}$. Since $\mfb \setminus \mathfrak{r}$ and $\mfs \setminus \mathfrak{r}$ are disjoint, each $X \in \mfs \setminus \mathfrak{r}$ must be covered by a rhombus $R_X$ of $\mft'$. After splitting each rhombus $R_X$ into two unit triangles, one obtains the desired tiling of $T_n \setminus A(\mathfrak{s})$ into unit rhombi and unit triangles containing exactly $|\mfs| - r(\mfs)$ unit downward triangles.
	
	Now observe that the number of rhombi in a tiling $\mft$ of $T_n \setminus A(\mfs)$ into unit rhombi and unit triangles determines the number of unit upward triangles and the number of unit downward triangles in $\mft$. Indeed, it is easy to verify that if $\mft$ contains $m$ unit rhombi, then it must contain $\binom{n}{2} - m$ unit downward triangles and $\binom{n+1}{2} - m - |\mfs|$ unit upward triangles. In particular, $\mft$ maximizes the number of unit rhombi if and only if it minimizes the number of unit downward triangles. Hence we are done once we prove that every tiling of $T_n \setminus A(\mfs)$ into unit rhombi and unit triangles contains at least $|\mfs| - r(\mfs)$ unit downward triangles.
	
	Take a tiling $\mft$ of $T_n \setminus A(\mfs)$ into unit rhombi and unit triangles minimizing the number of unit downward triangles. Let $\mfd$ and $\mfu$ be the sets of unit downward triangles and unit upward triangles of $\mft$, respectively. By Lemma~\ref{lem:pushing up a downward triangle}, there is no loss in assuming that all triangles in $\mfd$ are adjacent from below to unit upward triangles. On the other hand, the minimality of $|\mfd|$ ensures that no triangle in $\mfd$ is adjacent from below to a triangle in $\mfu$. Hence each triangle of $\mfd$ is adjacent from below to a triangle of $\mfs$. Merging each triangle in $\mfd$ with its corresponding adjacent from above triangle of $\mfs$, one obtains a lozenge tiling $\mft'$ of a lattice region $R$ satisfying that $\mfu(R) \cap \mfs$ consists precisely of those triangles in $\mfs$ that did not merge with the $|\mfd|$ unit downward triangles of $\mft$. By Theorem~\ref{thm:characterization of bases}, the subset $\mfu(R) \cap \mfs$ of $\mfs$ is independent. This implies that $|\mfs| - |\mfd| = |\mfu(R) \cap \mfs| \le r(\mfs)$ and, therefore, $|\mfd| \ge |\mfs| - r(\mfs)$, which completes the proof.
\end{proof}

The following corollary is an immediate consequence of Proposition~\ref{prop:rank characterization}.

\begin{cor} \label{cor:numerology of the tilings}
	If $\mfs \subseteq \mfu(T_n)$ and $\mft$ is a tiling of $T_n \setminus A(\mfs)$ into unit triangles and unit rhombi that maximizes the number of unit rhombi, then $\mft$ contains $\binom{n}{2} - (|\mfs| - r(\mfs))$ unit rhombi, $|\mfs| - r(\mfs)$ unit downward triangles, and $n - r(\mfs)$ unit upward triangles.
\end{cor}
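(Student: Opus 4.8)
The plan is to combine Proposition~\ref{prop:rank characterization} with the elementary bookkeeping already recorded inside its proof, so that only a one-line arithmetic computation remains. First I would recall that a tiling $\mft$ of $T_n \setminus A(\mfs)$ into unit rhombi and unit triangles is completely pinned down numerically by a single parameter: if $\mft$ uses $m$ unit rhombi, then, since $T_n$ contains $\binom{n}{2}$ unit downward triangles and $\binom{n+1}{2}$ unit upward triangles, each rhombus consumes exactly one of each, and the $|\mfs|$ upward triangles of $\mfs$ have been removed, $\mft$ must use $\binom{n}{2} - m$ unit downward triangles and $\binom{n+1}{2} - m - |\mfs|$ unit upward triangles. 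This is precisely the identity displayed in the second paragraph of the proof of Proposition~\ref{prop:rank characterization}, and it shows in particular that maximizing the number of rhombi is equivalent to minimizing the number of unit downward triangles.

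Next I would invoke Proposition~\ref{prop:rank characterization} itself: a tiling $\mft$ of $T_n \setminus A(\mfs)$ into unit rhombi and unit triangles that maximizes the number of rhombi contains exactly $|\mfs| - r(\mfs)$ unit downward triangles. Substituting $\binom{n}{2} - m = |\mfs| - r(\mfs)$ into the above gives $m = \binom{n}{2} - \big(|\mfs| - r(\mfs)\big)$ unit rhombi, and then the number of unit upward triangles in $\mft$ equals
\[
\binom{n+1}{2} - m - |\mfs| = \binom{n+1}{2} - \binom{n}{2} + \big(|\mfs| - r(\mfs)\big) - |\mfs| = n - r(\mfs),
\]
using $\binom{n+1}{2} - \binom{n}{2} = n$. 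This delivers all three counts claimed in the statement.

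There is essentially no obstacle here: the substance is carried entirely by Proposition~\ref{prop:rank characterization}, and what is left is the displayed arithmetic together with the observation (already made in that proof) that the rhombus count determines the other two counts. The only point worth a moment's care is to state the counting identity cleanly and to sanity-check the extreme case $\mfs = \mfu(T_n)$, where $|\mfs| = \binom{n+1}{2}$ and $r(\mfs) = n$: then the formulas give $\binom{n}{2} - \big(\binom{n+1}{2} - n\big) = 0$ rhombi, $\binom{n+1}{2} - n$ unit downward triangles, and $n - n = 0$ unit upward triangles, which is consistent with the fact that in this case the holey region is $T_n \setminus T_n = \varnothing$ once one recalls that $A(\mfu(T_n))$ together with the downward triangles fills $T_n$; the numerology therefore remains internally coherent, confirming that no special-casing is needed.
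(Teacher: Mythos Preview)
Your argument is correct and matches the paper's approach exactly: the paper simply declares the corollary an immediate consequence of Proposition~\ref{prop:rank characterization}, and you have spelled out that consequence using the counting identity from the second paragraph of that proposition's proof. One small slip in your closing sanity check: when $\mfs = \mfu(T_n)$ the holey region $T_n \setminus A(\mfs)$ is not $\varnothing$ but rather the union of all $\binom{n}{2}$ unit downward triangles, which is precisely what your formulas predict---so the check does succeed, just not for the reason you stated.
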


\begin{remark}
	Notice that Corollary~\ref{cor:numerology of the tilings} does not imply Theorem~\ref{thm:characterization of independent sets} as some of the $n - r(\mfs)$ unit upward triangles in the tiling $\mft$ might be seated at the bottom line of the lattice region $T_n$ and, therefore, do not have rhombi right below them to merge.
\end{remark}

\section{Tiling Characterization of the Circuits} \label{sec:rank and circuits}

We proceed to characterize the circuits $\mfc$ of $\T_n$ in terms of certain tilings of $T_n \setminus A(\mfc)$ into unit rhombi and (possibly reflected) type-$1$ trapezoids.

Let $M$ be a matroid. A subset $C$ of the ground set of $M$ is called a \emph{circuit} of $M$ if $|C| - r(C) = 1$ and $C \setminus \{x\}$ is an independent set of $M$ for each $x \in C$, that is, $C$ is a minimal dependent set of $M$. On the other hand, a \emph{loop} (resp., \emph{parallel}) of $M$ is a dependent set of $M$ of size one (resp., two).

\begin{remark}
	Tiling matroids are \emph{simple}, i.e., they contain neither loops nor parallels. 
\end{remark} 

\begin{example} \label{ex:circuit and non-circuit examples}
	Consider the tiling matroid $\T_4$. The left picture of Figure~\ref{fig:circuit and non-circuit examples} shows a circuit of rank $2$ (and size $3$) whose elements are depicted by the three dark unit upward triangles. It is easy to argue that all circuits of $\T_n$ have the same shape, meaning that they are geometrically equal up to translation. On the other hand, the right picture of Figure~\ref{fig:circuit and non-circuit examples} shows a subset $\mfs$ of $\mfu(T_4)$ of rank $4$ that is not a circuit even though $|\mfs| - r(\mfs) = 1$. Note that $\mfs$ contains a circuit of rank $2$.
	\begin{figure}[h]
		\begin{center}
			\includegraphics[width=4cm]{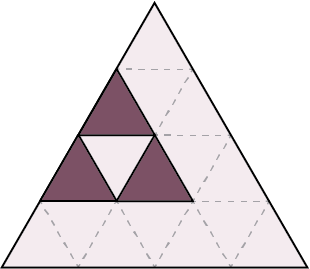}
			\hspace{2cm}
			\includegraphics[width=4cm]{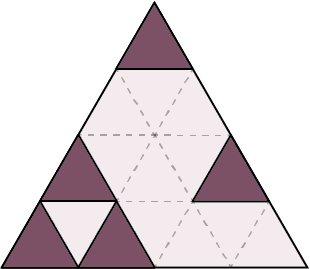}
			\caption{On the left, a circuit of rank $2$. On the right, a non-circuit of rank $4$.}
			\label{fig:circuit and non-circuit examples}
		\end{center}
	\end{figure}
\end{example}
\medskip

\begin{definition}
	A \emph{type-2 trapezoid} is a lattice trapezoid of $T_n$ which is the union of one unit upward triangle and two unit downward triangles (cf. Definition~\ref{def:type-1 trapezoids}). 
\end{definition}
\medskip

As in the case of type-1 trapezoids, one has three possible type-2 trapezoids (up to translation), one of them being horizontal. Observe that they are the reflection of the type-1 trapezoids through their largest sides.
\medskip

\begin{figure}[h]
	\centering
	\includegraphics[width = 1.8cm]{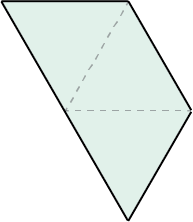}
	\hspace{1cm}
	\raisebox{0.5 \height}{\includegraphics[width = 2.6cm]{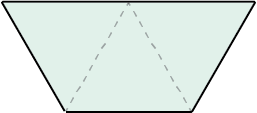}}
	\hspace{1cm}
	\includegraphics[width = 1.8cm]{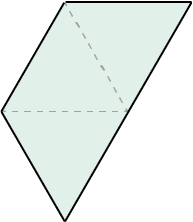}
	\caption{The three type-2 trapezoids up to translation.}
	\label{fig:the three type-2 trapezoids}
\end{figure}


To characterize further distinguished subsets $\mfs$ of the ground set of $\T_n$ in terms of tiling of $T_n \setminus A(\mfs)$ into unit rhombi and unit trapezoids, $\mfs$ cannot contain any circuit of rank $2$ because such circuits isolate unit triangles (see Example~\ref{ex:circuit and non-circuit examples}). However, this requirement does not suffice as the following example illustrates.

\begin{example}
	Consider the tiling matroid $\T_4$ along with the subset $\mfs$ of $\mfu(T_4)$ given by the dark unit upward triangles illustrated in Figure~\ref{fig:untilable complement}. As the lattice region $T_4 \setminus A(\mfs)$ consists of only one unit upward triangle and three unit downward triangles, it cannot be tiled into unit rhombi and unit trapezoids.
	\begin{figure}[h]
		\centering
		\includegraphics[width = 4cm]{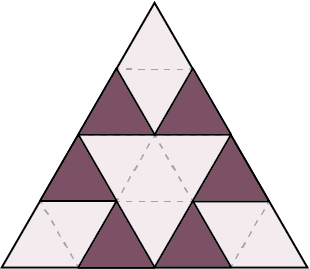}
		\caption{A subset of $\mfu(T_4)$ whose corresponding holey region cannot be tiled into unit rhombi and unit trapezoids.}
		\label{fig:untilable complement}
	\end{figure}
\end{example}

The holey region corresponding to most circuits, however, can be tiled into unit rhombi and unit trapezoids. Indeed, we will see in Theorem~\ref{thm:characterization of circuit} a tiling characterization of circuits of rank greater than $2$. First, we collect a few results we shall be using later. For a subset $\mfs$ of $\mfu(T_n)$, we say that a lattice upward triangle $T$ of $T_n$ of size $k$ is \emph{saturated} by~$\mfs$ if $|\mfs \cap \mfu(T)| = k$, \emph{over-saturated} by $\mfs$ if $|\mfs \cap \mfu(T)| \ge k$, and \emph{strictly over-saturated} by $\mfs$ if $|\mfs \cap \mfu(T)| > k$.

\bigskip
\begin{prop} \label{prop:convex hull of a circuit}
	Let $\mfc$ be a circuit of $\T_n$. Then there exists exactly one lattice upward triangle of $T_n$ that is strictly over-saturated by $\mfc$, namely the triangular hull of $\mfc$.
\end{prop}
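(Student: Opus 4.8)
The approach is to translate everything into the combinatorics of over-saturated lattice upward triangles and then extract the result from the minimality of a circuit. Recall that, by the very definition of $\ii_{n,3}$, a subset $\mfs \subseteq \mfu(T_n)$ is independent precisely when no lattice upward triangle of $T_n$ is strictly over-saturated by $\mfs$; equivalently, $\mfs$ is dependent exactly when some lattice upward triangle is strictly over-saturated by $\mfs$. Since $\mfc$ is dependent, at least one strictly over-saturated lattice upward triangle exists, so the content of the statement is (i) that the triangular hull $T_0$ of $\mfc$ is one of them and (ii) that it is the only one.

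The key step, and the only place the circuit hypothesis (rather than mere dependence) is genuinely used, is the observation that if $T$ is \emph{any} lattice upward triangle strictly over-saturated by $\mfc$, then in fact $\mfc \subseteq \mfu(T)$. Indeed, suppose some $X \in \mfc$ is not contained in $T$; then $\mfc \setminus \{X\}$ is independent by minimality of the circuit $\mfc$, yet $(\mfc \setminus \{X\}) \cap \mfu(T) = \mfc \cap \mfu(T)$ still has more than $\text{size}(T)$ elements, so $T$ would be strictly over-saturated by the independent set $\mfc \setminus \{X\}$, a contradiction. Consequently, every strictly over-saturated $T$ contains every unit triangle of $\mfc$, hence contains the triangular hull $T_0$ of $\mfc$; in particular $\text{size}(T) \ge \text{size}(T_0)$, and since $\mfc \subseteq \mfu(T_0) \subseteq \mfu(T)$ we have $\mfc \cap \mfu(T) = \mfc = \mfc \cap \mfu(T_0)$.

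With this in hand the remaining steps are short. First, $T_0$ is strictly over-saturated: fixing a strictly over-saturated $T$ (which exists since $\mfc$ is dependent), we get $|\mfc \cap \mfu(T_0)| = |\mfc| = |\mfc \cap \mfu(T)| \ge \text{size}(T) + 1 \ge \text{size}(T_0) + 1 > \text{size}(T_0)$. Second, $|\mfc| = \text{size}(T_0) + 1$: picking any $X \in \mfc$, the set $\mfc \setminus \{X\}$ is independent and contained in $\mfu(T_0)$, so the independence criterion applied to the single triangle $T_0$ gives $|\mfc| - 1 = |(\mfc \setminus \{X\}) \cap \mfu(T_0)| \le \text{size}(T_0)$, which together with the previous inequality forces equality. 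Finally, uniqueness: if $T$ is strictly over-saturated by $\mfc$, then $\text{size}(T) + 1 \le |\mfc \cap \mfu(T)| = |\mfc| = \text{size}(T_0) + 1$, so $\text{size}(T) \le \text{size}(T_0)$; combined with $\text{size}(T) \ge \text{size}(T_0)$ and $T \supseteq T_0$, this yields $T = T_0$. I expect no serious obstacle beyond correctly invoking the dictionary between the defining condition on translates of $P_{k,3}$ and strict over-saturation of lattice upward triangles, and being careful that the triangular hull $T_0$, which by definition is the smallest lattice upward triangle containing all unit triangles of $\mfc$, is contained in every strictly over-saturated $T$; everything after the crux observation is bookkeeping with the sizes of lattice upward triangles.
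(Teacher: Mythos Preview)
Your proof is correct and follows essentially the same approach as the paper: both hinge on the observation that any strictly over-saturated lattice upward triangle must contain all of $\mfc$ (else removing an outside element would leave an independent set still strictly over-saturating it), and then compare sizes to pin down $\text{size}(T_0)=|\mfc|-1$ and force uniqueness. The only cosmetic difference is that the paper first picks a minimal strictly over-saturated triangle and argues it equals the triangular hull, whereas you start from the triangular hull and show directly that it is strictly over-saturated; the underlying inequalities are identical.
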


\begin{proof}
	Since $\mfc$ is a dependent set of $\T_n$, there must be a lattice upward triangle $T$ of $T_n$ that is strictly over-saturated by $\mfc$. Notice that each $X \in \mfc$ must be inside $T$; otherwise, $T$ would be strictly over-saturated by $\mfc \setminus \{X\}$, contradicting that $\mfc \setminus \{X\}$ is an independent set of $\T_n$. Among all lattice upward triangles strictly over-saturated by $\mfc$, assume that $T$ is minimal under inclusion. The minimality of $T$ implies now that $T$ is the triangular hull of $\mfc$. As $\mfc$ is contained in $\mfu(T)$, it follows that $\text{size}(T) < |\mfc|$. The fact that $\mfc$ contains an independent subset of $\T_n$ of size $|\mfc|-1$ yields $|\mfc| - 1 \le \text{size}(T)$. Hence $\text{size}(T) = |\mfc| - 1$. Finally, let $T'$ be a lattice upward triangle of $T_n$ strictly over-saturated by $\mfc$. Since $T'$ contains $A(\mfc)$, we have that $T \subseteq T'$. This, along with the fact that $T'$ is strictly over-saturated by $\mfc$, guarantees that $\text{size}(T) \le \text{size}(T') \le |\mfc| - 1 = \text{size}(T)$. Thus, $T = T'$, and the uniqueness follows.
\end{proof}

\begin{lemma} \label{lem:filling equilateral trapezoids}
	Each lattice (isosceles) trapezoid of $T_n$ of side length $k$ can be tiled using unit rhombi and $k$ type-1 trapezoids.
\end{lemma}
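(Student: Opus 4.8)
The plan is to reduce to a single orientation and then tile the trapezoid one horizontal row at a time. First I would invoke the $120^\circ$ rotational symmetry of $T_n$ (the same device used in the proof of Theorem~\ref{thm:characterization of independent sets}): this rotation preserves the triangular subdivision of $T_n$, carries unit rhombi to unit rhombi and type-1 trapezoids to type-1 trapezoids, and cyclically permutes the three orientations of a lattice isosceles trapezoid. Hence it suffices to treat a \emph{horizontal} trapezoid $R$, i.e.\ one whose two parallel sides are horizontal. Write $a$ for the length of its bottom side and $b$ for the length of its top side, so that $b < a$ and $a - b = k$; since $R$ is a trapezoid rather than a triangle, $b \ge 1$.

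Next I would set up the row decomposition. The horizontal trapezoid $R$ is the union of exactly $k$ horizontal rows $\rho_1, \dots, \rho_k$, listed from bottom to top, where $\rho_i$ is a horizontal strip consisting of $a - i + 1$ unit upward triangles and $a - i$ unit downward triangles, which alternate and begin and end with an upward triangle. I would then tile each $\rho_i$ individually: the three leftmost unit triangles of $\rho_i$ (an upward triangle, the downward triangle to its right, and the next upward triangle) form a horizontal type-1 trapezoid $\tau_i$, and the remaining $2(a - i - 1)$ unit triangles of $\rho_i$ — which alternate, beginning with a downward triangle and ending with an upward triangle — split into $a - i - 1$ horizontal unit rhombi by pairing each downward triangle with the upward triangle immediately to its right.

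Finally I would assemble these pieces: since the rows partition $R$ and the pieces produced within distinct rows have pairwise disjoint interiors, the union of the per-row decompositions is a tiling of $R$ into the $k$ type-1 trapezoids $\tau_1, \dots, \tau_k$ together with $\sum_{i=1}^{k}(a - i - 1)$ unit rhombi, which is exactly what the lemma asserts. The only place where something could go wrong is the top row $\rho_k$, which contains only $b + 1$ upward and $b$ downward unit triangles; I expect this boundary case to be the sole (minor) obstacle, and it is resolved precisely by the inequality $b \ge 1$, which guarantees $a - i - 1 \ge b - 1 \ge 0$ for every $i \in \{1, \dots, k\}$, so that each row — including the topmost — genuinely contains a type-1 trapezoid and a nonnegative number of unit rhombi.
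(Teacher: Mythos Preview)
Your proposal is correct and follows essentially the same approach as the paper: the paper's proof is the one-sentence observation that each unit row of the trapezoid can be tiled by placing a horizontal type-1 trapezoid over its three leftmost unit triangles and filling the remainder of the row with horizontal unit rhombi, accompanied by a figure. Your write-up is simply a more explicit version of this, with the reduction to the horizontal orientation and the verification of the top-row boundary case spelled out.
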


\begin{proof}
	We can tile each unit row of such a lattice trapezoid by placing a horizontal type-1 trapezoid covering its three leftmost unit triangles and covering the rest of the row with unit rhombi, as illustrated in~Figure~\ref{fig:tiled trapezoid}.
	\begin{figure}[h]
		\begin{center}
			\includegraphics[width=6.5cm]{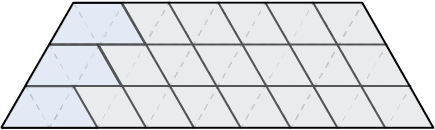}
		\end{center}
		\caption{A lattice trapezoid of side length $3$ tiled using unit rhombi and three type-$1$ trapezoids.}
		\label{fig:tiled trapezoid}
	\end{figure}

\end{proof}

\begin{lemma} \label{lem:tiles in the border of a triangular region}
	Let $T$ be a lattice upward triangle of $T_n$. Then each unit rhombus or type-1 trapezoid of $T_n$ covers at least the same number of unit upward triangles inside $T$ as unit downward triangles inside $T$.
\end{lemma}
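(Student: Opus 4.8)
The plan is to reduce the inequality to the following local fact: \emph{if the (unique) unit downward triangle of a unit rhombus or of a type-1 trapezoid lies inside $T$, then so does every unit upward triangle of that tile.} Granting this, the lemma is immediate. A unit rhombus has exactly one unit downward triangle $D$ and one unit upward triangle, and the latter shares an edge with $D$; a type-1 trapezoid has exactly one unit downward triangle $D$ and two unit upward triangles, each of which shares an edge with $D$ (two unit upward triangles are never edge-adjacent, so in the connected region cut out by a type-1 trapezoid the downward triangle must border each of the two upward ones; compare the three pictures in Figure~\ref{fig:the three type-1 trapezoids}). Thus, for any such tile $X$: if its downward triangle $D$ is not contained in $T$, then $X$ covers no unit downward triangle inside $T$ and there is nothing to prove; and if $D \subseteq T$, then by the local fact every unit upward triangle of $X$ lies in $T$, so $X$ covers at least one unit upward triangle inside $T$ and exactly one unit downward triangle inside $T$.

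It remains to prove the local fact, and for that I would first record an elementary observation about the subdivided triangle $T$: every unit edge lying on the boundary $\partial T$ is an edge of a unit upward triangle contained in $T$. Indeed, writing $T$ as a parallel translate of $T_\ell$ with horizontal base, a side-by-side inspection shows that along the bottom side the bordering tiles are the bottom edges of the bottom-row upward triangles, while along the left and right sides the bordering tiles are, row by row, the leftmost (resp.\ rightmost) upward triangle of each row; in every case these are upward triangles. (Equivalently, in the $\rr^3$-coordinate description one may translate $T$ so that its three sides lie on the hyperplanes $x_i = c_i$, which makes the claim transparent.) Consequently, no unit downward triangle contained in $T$ can have an edge on $\partial T$: such an edge would be a boundary edge of $T$ whose only adjacent tile inside $T$ is that downward triangle itself, contradicting the observation.

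With this in hand the local fact follows at once: if a unit downward triangle $D$ lies in $T$, then none of its three edges lies on $\partial T$, so each is an interior edge of $T$ and is therefore shared with a unit triangle of $T$ on its other side; that is, all three unit upward triangles edge-adjacent to $D$ lie in $T$, and in particular so do the one or two upward triangles of any unit rhombus or type-1 trapezoid whose downward triangle is $D$. The step that will need the most care in the write-up is the boundary observation, since it hinges on the placement conventions for $T_n$; once those are invoked it is a routine row-by-row inspection of the three sides of $T$, and everything else is bookkeeping. (A purely coordinate-based argument is also available, using the componentwise inequalities $a_i \ge c_i$ that characterize containment in $T$ together with the fact that the three upward neighbors of a downward triangle are obtained from it by adding a standard basis vector.)
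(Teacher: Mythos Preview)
Your argument is correct. The route differs from the paper's in organization: the paper proceeds by a four-case split on the number $t$ of unit triangles of the tile that land inside $T$ (checking $t=0,1,2,3$ directly), whereas you isolate and prove the single structural fact ``a unit downward triangle contained in $T$ has all three upward neighbors contained in $T$'' and then dispatch the lemma by the dichotomy $D\subseteq T$ versus $D\not\subseteq T$. The paper's case analysis tacitly relies on the same structural fact (it is what makes the $t=1$ case come out as an upward triangle, and what rules out $t=2$ for a type-1 trapezoid), so the mathematical content is the same; your version simply makes that dependence explicit and avoids enumerating $t$. Either presentation is fine; yours is a bit more self-contained, while the paper's is shorter once one grants the implicit claim about boundary edges.
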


\begin{proof}
	Let $S$ be either a unit rhombi or a type-$1$ trapezoid of $T_n$. Set $I = S \cap T$, and let $t$ be the number of unit triangles inside $I$, i.e., $t = |\mfd(I)| + |\mfu(I)|$. Clearly, $t \in \{0,1,2,3\}$. If $t=0$, then $|\mfd(I)| = |\mfu(I)| = 0$. If $t=1$, then $I$ must be a unit upward triangle and so $0 = |\mfd(I)| = |\mfu(I)| - 1$. If $t = 2$, then $I$ must be a unit rhombi and, therefore, $1 = |\mfd(I)| = |\mfu(I)|$. Finally, if $t=3$, then $S$ must be a type-$1$ trapezoid and $I = S$, which implies that $1 = |\mfd(I)| = |\mfu(I)| - 1$. As in any case we have verified that $|\mfd(I)| \le |\mfu(I)|$, the lemma follows.
\end{proof}

We are now in a position to give a characterization of the circuits of $\T_n$.

\begin{theorem} \label{thm:characterization of circuit}
	Let $\mfc$ be a subset of $\mfu(T_n)$ such that $|\mfc| \ge 4$. Then $\mfc$ is a circuit of $\T_n$ if and only if the following two conditions hold: \vspace{2pt}
	\begin{enumerate}
		\item the triangular hull of $\mfc$ is the only lattice upward triangle strictly over-saturated by $\mfc$; \vspace{3pt}
	 	\item the minimum number of type-$2$ trapezoids we can use to tile $T_n \setminus A(\mfc)$ into unit rhombi and unit trapezoids is $1$.
	\end{enumerate}
\end{theorem}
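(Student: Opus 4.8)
The plan is to combine the tiling descriptions of bases and independent sets (Theorems~\ref{thm:characterization of bases} and~\ref{thm:characterization of independent sets}), Propositions~\ref{prop:convex hull of a circuit} and~\ref{prop:rank characterization}, and Lemma~\ref{lem:filling equilateral trapezoids}, together with one bookkeeping identity: if a tiling of $T_n \setminus A(\mfc)$ into unit rhombi and unit trapezoids uses $a$ type-$1$ and $b$ type-$2$ trapezoids, then counting the upward versus the downward unit triangles it covers gives $a - b = n - |\mfc|$. In particular $n - |\mfc|$ is determined, so condition~(2) is equivalent to saying that every such tiling has $b \ge 1$ and that some such tiling has $b = 1$. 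I will also use two elementary observations: no downward unit triangle of a lattice upward triangle $T$ meets the boundary of $T$, so all three of its neighbors are upward unit triangles of $T$; and for any upward unit triangle $X$, the union of $X$ with any two of the three downward unit triangles adjacent to $X$ is a type-$2$ trapezoid.

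For the direct implication, assume $\mfc$ is a circuit with $|\mfc| \ge 4$. Condition~(1) is exactly Proposition~\ref{prop:convex hull of a circuit}. For~(2): a tiling with $b = 0$ would have $a = n - |\mfc|$, whence $\mfc$ would be independent by Theorem~\ref{thm:characterization of independent sets}, a contradiction; so $b \ge 1$ always. To realize $b = 1$, let $T^*$ be the triangular hull of $\mfc$, so $\text{size}(T^*) = |\mfc| - 1 \ge 3$ and $A(\mfc) \subseteq T^*$. First I would tile $T_n \setminus T^*$ with unit rhombi and type-$1$ trapezoids: arguing by induction on $n - \text{size}(T^*)$, a lattice upward triangle properly contained in $T_n$ can always be enlarged within $T_n$ to a lattice upward triangle with one more row (grown downward, or to the left, or to the right; at least one of these moves stays inside $T_n$), and the added layer is a lattice isosceles trapezoid of $T_n$, which Lemma~\ref{lem:filling equilateral trapezoids} tiles with unit rhombi and type-$1$ trapezoids. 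Second I would tile $T^* \setminus A(\mfc)$ with unit rhombi and exactly one type-$2$ trapezoid: the restriction of $\T_n$ to $\mfu(T^*)$ is the tiling matroid on $T^*$, of rank $\text{size}(T^*) = |\mfc| - 1$, in which $\mfc$ is a circuit, so every $\mfc \setminus \{x\}$ is a basis; by Theorem~\ref{thm:characterization of bases} applied to $T^*$ there is a lozenge tiling of $T^* \setminus A(\mfc \setminus \{x\})$, and deleting the upward triangle $x$ from it exposes a single downward triangle $y$. The three (upward) neighbors of $y$ in $T^*$ cannot all lie in $\mfc$, for then they and $y$ would constitute a strictly over-saturated size-$2$ upward triangle, contradicting Proposition~\ref{prop:convex hull of a circuit} (as $\text{size}(T^*) \ge 3$); hence $y$ is adjacent to a rhombus $R$ of the lozenge tiling, and replacing $R$ and $y$ by the type-$2$ trapezoid $R \cup \{y\}$ gives the desired tiling of $T^* \setminus A(\mfc)$. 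Gluing the two tilings along $\partial T^*$ produces a tiling of $T_n \setminus A(\mfc)$ with exactly one type-$2$ trapezoid, so the minimum in~(2) equals $1$.

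For the converse, assume $\mfc$ satisfies~(1) and~(2). The impossibility of $b = 0$ together with Theorem~\ref{thm:characterization of independent sets} shows that $\mfc$ is dependent, so $|\mfc| - r(\mfc) \ge 1$. On the other hand, splitting the unique type-$2$ trapezoid of a tiling with $b = 1$ into a rhombus and a downward triangle, and each type-$1$ trapezoid into a rhombus and an upward triangle, yields a tiling of $T_n \setminus A(\mfc)$ into unit rhombi and unit triangles with exactly one downward triangle; since (by the proof of Proposition~\ref{prop:rank characterization}) a tiling maximizes the number of rhombi exactly when it minimizes the number of downward triangles, and this minimum is $|\mfc| - r(\mfc)$, we get $|\mfc| - r(\mfc) \le 1$, hence $|\mfc| - r(\mfc) = 1$. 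A set of corank $1$ contains a unique circuit $\mfc^{\circ}$. If $\mfc^{\circ} \subsetneq \mfc$, then the triangular hull $T^{\circ}$ of $\mfc^{\circ}$ is strictly over-saturated by $\mfc$, so by~(1) it equals the triangular hull $T^*$ of $\mfc$; but $\text{size}(T^*) = \text{size}(T^{\circ}) = |\mfc^{\circ}| - 1$ by Proposition~\ref{prop:convex hull of a circuit}, and $\mfc \subseteq \mfu(T^*)$, a parallel translate of $P_{|\mfc^{\circ}| - 1, 3}$, so any independent subset of $\mfc$ has at most $|\mfc^{\circ}| - 1$ elements; this forces $r(\mfc) \le |\mfc^{\circ}| - 1 \le (|\mfc| - 1) - 1 = r(\mfc) - 1$, which is absurd. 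Therefore $\mfc^{\circ} = \mfc$, i.e., $\mfc$ is a circuit.

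The crux is realizing $b = 1$ in the direct implication: one needs a type-$2$ trapezoid but no superfluous type-$1$ trapezoid, which is precisely why the region must be split at the triangular hull --- a near-lozenge tiling with a single defect inside $T^*$, and only type-$1$ trapezoids outside it. Within that split, the fussy point is the induction on $T_n \setminus T^*$: verifying that one of the three ``add a row'' enlargements of a lattice upward triangle always stays inside $T_n$, so that $T_n \setminus T^*$ genuinely decomposes into lattice isosceles trapezoids handled by Lemma~\ref{lem:filling equilateral trapezoids}.
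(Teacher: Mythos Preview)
Your proof is correct. The forward implication follows the paper's argument closely: both construct the single type-$2$ trapezoid inside the triangular hull $T^*$ by taking a lozenge tiling of $T^* \setminus A(\mfc \setminus \{x\})$ and merging the exposed downward triangle with an adjacent rhombus, using Proposition~\ref{prop:convex hull of a circuit} (and $|\mfc|\ge 4$) to guarantee such a rhombus exists. The only cosmetic difference is that the paper tiles $T_n \setminus T^*$ by splitting it at once into three lattice trapezoids (one along each side of $T^*$), whereas you grow $T^*$ one row at a time; both reduce to Lemma~\ref{lem:filling equilateral trapezoids}.

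The backward implication, however, takes a genuinely different route. The paper argues geometrically: it invokes Lemma~\ref{lem:tiles in the border of a triangular region} (a rhombus or type-$1$ trapezoid never covers more downward than upward unit triangles of a given lattice upward triangle $T$) to deduce from a $b=1$ tiling that $|\mfu(T) \cap \mfc| \le \text{size}(T) + 1$ for every $T$; combined with condition~(1), this shows directly that $\mfc \setminus \{X\}$ is independent for each $X$. You instead route through Proposition~\ref{prop:rank characterization}: splitting the trapezoids of a $b=1$ tiling yields a rhombus--triangle tiling of $T_n\setminus A(\mfc)$ with a single downward triangle, whence $|\mfc| - r(\mfc) \le 1$; then a pure matroid argument (the unique circuit in a corank-$1$ set, together with condition~(1) and the size formula of Proposition~\ref{prop:convex hull of a circuit}) finishes. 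Your approach avoids Lemma~\ref{lem:tiles in the border of a triangular region} entirely at the price of importing the rank machinery from Section~\ref{sec:rank and tilings}; the paper's approach stays within the local tiling picture but needs that extra boundary-counting lemma. Both are clean.
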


\begin{proof}
	For the forward implication, let $\mfc$ be a circuit. By Proposition~\ref{prop:convex hull of a circuit}, the triangular hull $T$ of $\mfc$ is the unique lattice upward triangle strictly over-saturated by $\mfc$. We proceed to show that condition~(2) also holds. As $\mfc$ is a circuit, $\text{size}(T) = r(\mfc)$. Now fix $X \in \mfc$. Note that $\mfc \setminus \{X\}$ is a basis of the tiling matroid $\T_{r(\mfc)}$ with $T_{r(\mfc)} = T$. Thus, one can use Theorem~\ref{thm:characterization of bases} to obtain a tiling $\mft$ of $T \setminus A(\mfc \setminus \{X\})$ consisting of unit rhombi. In such a tiling, $X$ must be covered by a rhombus $R_X$. Notice that the unit downward triangle of $R_X$ must be adjacent to a unit upward triangle $Y$ not contained in $\mfc$; otherwise, the triangular hull of $R_X$ (which has size $2$) would be strictly over-saturated by $\mfc$, and Proposition~\ref{prop:convex hull of a circuit} would force $|\mfc| = 3$. Let $R_Y$ be the unit rhombus of $\mft$ covering $Y$. Now we can obtain a tiling $\mft'$ of $T \setminus A(\mfc)$ as follows. First keep the tiling configuration of $\mft$ outside $R_X \cup R_Y$, then make $X$ hollow, and finally merge the unit downward triangle of $R_X$ to $R_Y$. Clearly, $\mft'$ consists of unit rhombi and one type-2 trapezoid, namely the new tile containing $Y$.
	
	From the tiling $\mft'$ of $T \setminus A(\mfc)$, we can construct the desired tiling of $T_n \setminus A(\mfc)$ provided we tile $T_n \setminus T$ into unit rhombi and type-$1$ trapezoids. To do this, first split the lattice region $T_n \setminus T$ into three lattice trapezoids as illustrated in Figure~\ref{fig:tiling trapezoids}.
	\begin{figure}[h]
		\begin{center}
			\includegraphics[width=4cm]{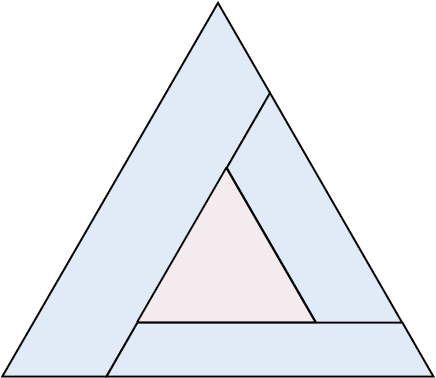}
			\caption{The lattice region $T_n \setminus T$ split into three lattice trapezoids.}
			\label{fig:tiling trapezoids}
		\end{center}
	\end{figure}
	Clearly, the sum of the side lengths of these three lattice trapezoids is $n - r(\mfc)$. Therefore one can use Lemma~\ref{lem:filling equilateral trapezoids} to tile $T_n \setminus T$ into unit rhombi and $n - r(\mfc)$ type-1 trapezoids, obtaining thereby the desired tiling of $T \setminus A(\mfc)$. Finally, notice that $1$ is the minimal number of type-2 trapezoids we can use to tile $T_n \setminus A(\mfc)$ since using $0$ type-2 trapezoids would imply, by Theorem~\ref{thm:characterization of independent sets}, that $\mfc$ is an independent set of $\T_n$.
	
	To prove the backward implication, we first show that $|\mfu(T) \cap \mfc| \le \ell + 1$ for each lattice upward triangle $T$ of $T_n$ of size $\ell$. Suppose, by way of contradiction, that $|\mfu(T) \cap \mfc| \ge k+2$ for some lattice upward triangle $T$ of size $k$. As $|\mfu(T)| = |\mfd(T)| + k$ and $|\mfd(T)| = |\mfd(T \setminus A(\mfc))|$, one finds that
	\[
		|\mfu(T \setminus A(\mfc))| = |\mfu(T)| - |\mfu(T) \cap \mfc| \le |\mfu(T)| - (k+2) = |\mfd(T \setminus A(\mfc))| - 2.
	\]
	Then $T \setminus A(\mfc)$ contains at least two more unit downward triangles than unit upward triangles. By Lemma~\ref{lem:tiles in the border of a triangular region}, every unit rhombus or type-1 trapezoid in any tiling of $T_n \setminus A(\mfc)$ covers at least the same number of unit upward triangles as unit downward triangles of $\mfu(T)$. Thus, we would need at least two type-$2$ trapezoids to tile $T \setminus A(\mfc)$ into unit rhombi and unit trapezoids, which contradicts condition~(2).
	
	It is clear that $\mfc$ cannot be an independent set of $\T_n$; otherwise, we could use Theorem~\ref{thm:characterization of independent sets} to tile $T_n \setminus A(\mfc)$ using $0$ type-2 trapezoids, contradicting that the minimum number of type-2 trapezoids needed to tile $T_n \setminus A(\mfc)$ into unit rhombi and unit trapezoids is $1$. This, along with the fact that $|\mfu(T) \cap \mfc| \le \ell + 1$ for each lattice upward triangle $T$ of $T_n$ of size $\ell$, implies that $|\mfu(T') \cap \mfc| = \ell + 1$ for some lattice upward triangle $T'$ of $T_n$ of size $\ell$.
	
	We finally verify that $\mfc$ is a circuit of $\T_n$. Take $X \in \mfc$. By condition~(1), the only lattice upward triangle of $T_n$ strictly over-saturated by $\mfc$ is the triangular hull $T$ of $\mfc$. Set $\ell = \text{size}(T)$. The existence of a lattice upward triangle of $T_n$ over-saturated by $\mfc$ by exactly one unit upward triangle forces $|\mfu(T) \cap \mfc| = \ell + 1$. Since $T$ is the triangular hull of $\mfc$, it follows that $X \in \mfu(T)$ and, therefore, $T$ is not strictly over-saturated by $\mfc \setminus \{X\}$. Because no lattice upward triangle of $T_n$ is strictly over-saturated by $\mfc \setminus \{X\}$, the latter is an independent set of $\T_n$. As $X$ was arbitrarily chosen, $\mfc$ is a circuit.
\end{proof}

The following example illustrates that neither of the two conditions in Theorem~\ref{thm:characterization of circuit} by itself suffices to ensure that $\mfc$ is a circuit of $\T_n$.

\begin{example}
	Consider the tiling matroid $\T_4$. Let $\mfs \subset \mfu(T_4)$ consists of the dark unit upward triangles in the left picture of Figure~\ref{fig:circuit violation}. Note that $\mfs$ satisfies condition~(1) of Theorem~\ref{thm:characterization of circuit} since the only lattice triangle of $T_4$ that is strictly over-saturated by $\mfs$ is $T_4$, which is the triangular hull of $\mfs$. However, $\mfs$ is not a circuit of $\T_4$ (observe that we need at least two type-$2$ trapezoids to tile $T_4 \setminus A(\mfs)$). Now let $\mfs'$ be the set of dark unit upward triangles in the right picture of Figure~\ref{fig:circuit violation}. Observe that $\mfs'$ satisfies condition~(2) of Theorem~\ref{thm:characterization of circuit}; a tiling of $T_4 \setminus A(\mfs')$ as described in condition~(2) is shown in the picture. However, $\mfs'$ is not a circuit of $\T_4$ (one can see that the triangular hull of $\mfs'$ is not the only lattice upward triangle of $T_4$ strictly over-saturated by $\mfs'$).
	\begin{figure}[h]
		\begin{center}
			\includegraphics[width=4cm]{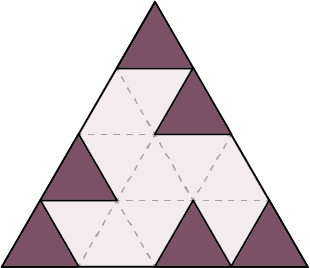}
			\hspace{2cm}
			\includegraphics[width=4cm]{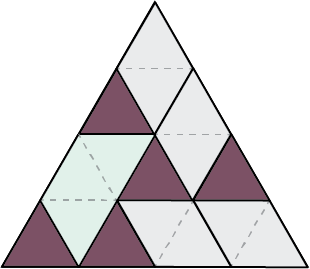}
			\caption{Two subsets of $\mfu(T_4)$ that are not circuits of $\T_4$.}
			\label{fig:circuit violation}
		\end{center}
	\end{figure}
\end{example}
\bigskip

\section{Geometric Characterization of the Flats} \label{sec:flats}

Let $M$ be a matroid with ground set $E$. The \emph{closure operator} $\text{cl} \colon 2^E \to 2^E$ of $M$ is defined as follows:
\[
	\text{cl}(S) = \{x \in E \mid r(S \cup \{x\}) = r(S)\}.
\]
Matroids can be characterized in terms of their closure operators; see \cite[Section~1.4]{jO92}. A subset $S$ of $E$ is called a \emph{flat} of $M$ provided that $\text{cl}(S) = S$. In this section, we give a geometric description of the flats of tiling matroids. To facilitate this, let us first introduce some notation.
 
Let $\mfs$ be a subset of $\mfu(T_n)$. Given two lattice upward triangles $T$ and $T'$ of $T_n$, we let $T \vee T'$ denote the triangular hull of $T \cup T'$. On the other hand, we say that the lattice upward triangle $T$ of $T_n$ of size $k$ is \emph{completely over-saturated} by $\mfs$ if $\mfu(T) \subseteq \mfs$. We use the following result in the proof of Proposition~\ref{prop:flats of tiling matroids}

\begin{lemma} \cite[Lemma~4.2]{AB07} \label{lem:on saturated simplices}
	Let $\mfs$ be an independent set of $\T_n$, and let $T$ and $T'$ be two lattice upward triangles of $T_n$ saturated by $\mfs$. If $T \cap T' \neq \emptyset$, then the lattice upward triangles $T \cap T'$ and $T \vee T'$ of $T_n$ are also saturated by $\mfs$.
\end{lemma}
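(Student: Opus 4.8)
The plan is to prove the statement by a counting argument based on unit triangles, leveraging the numerology that saturation imposes. Recall that a lattice upward triangle $T$ of size $k$ has $\binom{k+1}{2}$ unit upward triangles and $\binom{k}{2}$ unit downward triangles, so $|\mfu(T)| - |\mfd(T)| = k = \text{size}(T)$. When $\mfs$ saturates $T$, exactly $k$ of the upward triangles of $T$ belong to $\mfs$, which will be the central fact to exploit. The first step is to reduce to the case where $T$ and $T'$ actually overlap in a lattice upward triangle: since $T \cap T' \neq \emptyset$ and $T$, $T'$ are upward triangles (parallel translates of smaller copies of $T_n$), their intersection $T \cap T'$ is itself a (possibly degenerate) lattice upward triangle of $T_n$, so this needs only a short geometric observation.

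\textbf{Saturation of the intersection.} Write $a = \text{size}(T)$, $b = \text{size}(T')$, $c = \text{size}(T \cap T')$, and $d = \text{size}(T \vee T')$. The first key step is a containment-exclusion count on the ground set. Since $T \vee T'$ contains both $T$ and $T'$, a short geometric lemma gives $\mfu(T) \cup \mfu(T') \subseteq \mfu(T \vee T')$ and, crucially, $\mfu(T) \cap \mfu(T') = \mfu(T \cap T')$; likewise $\mfu(T) \cup \mfu(T')$ together with $\mfu(T \vee T')$ fit into the configuration so that $|\mfu(T \vee T')| + |\mfu(T \cap T')| \ge |\mfu(T)| + |\mfu(T')|$, and in fact the geometry of two overlapping upward triangles forces the stronger size relation $d + c \le a + b$ — indeed, placing $T$ and $T'$ with lower-left corners, one checks $d + c = a + b$ when the triangles are "aligned" and the hull is no larger than forced, but the safe inequality $d + c \le a + b$ is what I will use. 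Now, because $\mfs$ is independent, $|\mfs \cap \mfu(T \cap T')| \le c$ and $|\mfs \cap \mfu(T \vee T')| \le d$. Combining with the saturation hypotheses $|\mfs \cap \mfu(T)| = a$ and $|\mfs \cap \mfu(T')| = b$ and inclusion–exclusion on the sets $\mfs \cap \mfu(T)$ and $\mfs \cap \mfu(T')$ (whose intersection is $\mfs \cap \mfu(T \cap T')$ and whose union lies in $\mfs \cap \mfu(T \vee T')$):
\[
	a + b = |\mfs \cap \mfu(T)| + |\mfs \cap \mfu(T')| = |\mfs \cap \mfu(T \cup T')| + |\mfs \cap \mfu(T \cap T')| \le d + c \le a + b.
\]
Hence every inequality is an equality: $|\mfs \cap \mfu(T \cap T')| = c$ and $|\mfs \cap \mfu(T \vee T')| = d$, i.e. both $T \cap T'$ and $T \vee T'$ are saturated by $\mfs$.

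\textbf{The main obstacle.} The step I expect to require the most care is the geometric size inequality $d + c \le a + b$ for the hull and intersection of two overlapping lattice upward triangles. Once one fixes coordinates — say each upward triangle of size $s$ with lower-left lattice corner $v$ occupies $\{x : x \ge v,\ \|x - v\|_1 \le s-1\}$ in the standard simplicial coordinates — the intersection and hull sizes can be written explicitly in terms of the corner coordinates, and the inequality becomes a routine (but slightly fiddly) min/max manipulation; the hypothesis $T \cap T' \ne \emptyset$ is exactly what keeps the intersection nondegenerate so that all the sizes are genuine positive integers and the bookkeeping is clean. I would isolate this as a small self-contained geometric claim and dispatch it with a coordinate computation, after which the counting argument above closes the proof immediately. (This is essentially the content quoted from \cite[Lemma~4.2]{AB07}, so in the present paper it can be cited rather than reproved, and I would simply recall the coordinate setup to make the inclusion–exclusion step transparent.)
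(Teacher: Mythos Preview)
The paper does not supply a proof of this lemma at all: it is quoted from \cite[Lemma~4.2]{AB07} and used as a black box in the proof of Proposition~\ref{prop:flats of tiling matroids}. So there is nothing in the present paper to compare your argument against, and you yourself anticipated this at the end of your proposal.

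That said, your inclusion--exclusion argument is correct and is essentially the standard proof. One sharpening removes the uncertainty you flag as the ``main obstacle'': the size relation is in fact an \emph{equality} $c + d = a + b$, not merely an inequality. In the lattice-point model a size-$k$ upward triangle is $\{x : x_i \ge a_i \text{ for all } i\}$ with $\sum_i a_i = n - k$; the intersection of two such triangles has corner $(\max(a_i,b_i))_i$ and the triangular hull has corner $(\min(a_i,b_i))_i$, whence
\[
c + d \;=\; 2n - \sum_i \bigl(\max(a_i,b_i) + \min(a_i,b_i)\bigr) \;=\; 2n - \sum_i (a_i + b_i) \;=\; a + b.
\]
With this in hand your chain
\[
a + b \;=\; |\mfs \cap \mfu(T)| + |\mfs \cap \mfu(T')| \;\le\; |\mfs \cap \mfu(T \vee T')| + |\mfs \cap \mfu(T \cap T')| \;\le\; d + c \;=\; a + b
\]
collapses to equalities and both saturation statements follow at once. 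The hypothesis $T \cap T' \ne \emptyset$ is exactly what guarantees $\sum_i \max(a_i,b_i) \le n-1$, i.e.\ that $c \ge 1$ and the intersection is a genuine lattice upward triangle, so no ``fiddly'' case analysis is needed.
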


\begin{prop} \label{prop:flats of tiling matroids}
	A subset $\mff$ of $\mfu(T_n)$ is a flat of $\T_n$ if and only if every lattice upward triangle of $T_n$ over-saturated by $\mff$ is also completely over-saturated by $\mff$.
\end{prop}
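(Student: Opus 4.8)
The plan is to prove the biconditional by working directly with the definition of flat, i.e., with the closure operator, and translating the condition ``$x \in \text{cl}(\mff)$'' into a statement about over-saturated lattice upward triangles. The key observation I would establish first is a ``local'' description of closure: for $\mff \subseteq \mfu(T_n)$ and $x \in \mfu(T_n) \setminus \mff$, one has $x \in \text{cl}(\mff)$ if and only if there exists a lattice upward triangle $T$ of $T_n$ containing $x$ that is saturated by $\mff$ (equivalently, using Proposition~\ref{prop:convex hull of a circuit} or a direct rank argument, $x$ together with a suitable independent subset of $\mff$ inside $T$ forms a circuit, or $x$ fails to increase the rank because it is trapped in a saturated subtriangle). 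Concretely, $r(\mff \cup \{x\}) = r(\mff)$ precisely when no independent set witnessing $r(\mff)$ can be augmented by $x$; I would argue via Theorem~\ref{thm:characterization of bases} and the independent-set characterization that this augmentation fails exactly when some lattice upward triangle $T \ni x$ satisfies $|\mff \cap \mfu(T)| = \text{size}(T)$, i.e., is saturated by $\mff$ (so that $|(\mff \cup \{x\}) \cap \mfu(T)| = \text{size}(T) + 1$, violating independence).

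Granting that local description, the forward direction goes as follows. Suppose $\mff$ is a flat, and let $T$ be a lattice upward triangle over-saturated by $\mff$, say of size $k$ with $|\mff \cap \mfu(T)| \ge k$. I want to show $\mfu(T) \subseteq \mff$, i.e., $\mff$ is completely over-saturated on $T$. Pick any $x \in \mfu(T)$. If $|\mff \cap \mfu(T)| > k$ we could pass to a minimal over-saturated subtriangle (of size equal to $|\mff \cap \mfu(T)| - 1$ or smaller) and reduce to the saturated case; in the saturated case, $T$ is a saturated triangle containing $x$, so by the local description $x \in \text{cl}(\mff) = \mff$. Hence $\mfu(T) \subseteq \mff$. (The case analysis here must be handled carefully: if $T$ is strictly over-saturated, I would take $T'' \subseteq T$ minimal among subtriangles with $|\mff \cap \mfu(T'')| \ge \text{size}(T'')$; minimality forces $|\mff \cap \mfu(T'')| = \text{size}(T'') + 1$, and then any size-$(\text{size}(T''))$ subtriangle of $T''$ containing a given $x$ is over-saturated again, so an easy induction on size drives everything down and shows completeness, possibly invoking Lemma~\ref{lem:on saturated simplices} to intersect saturated triangles.)

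For the backward direction, suppose every over-saturated lattice upward triangle is completely over-saturated, and let $x \in \text{cl}(\mff)$; I must show $x \in \mff$. By the local description there is a lattice upward triangle $T \ni x$ saturated by $\mff$, so $T$ is over-saturated by $\mff \cup \{x\}$ as well; but in fact I should argue that $T$ is over-saturated by $\mff$ itself — it is, since saturated means $|\mff \cap \mfu(T)| = \text{size}(T) \ge \text{size}(T)$. By hypothesis $T$ is then completely over-saturated by $\mff$, i.e., $\mfu(T) \subseteq \mff$, and since $x \in \mfu(T)$ we get $x \in \mff$. Thus $\text{cl}(\mff) \subseteq \mff$, and the reverse inclusion is automatic, so $\mff$ is a flat.

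The main obstacle I anticipate is pinning down the local description of the closure operator rigorously, specifically the direction ``$x \in \text{cl}(\mff) \Rightarrow$ some saturated triangle contains $x$''. Proving this cleanly requires showing that if $x$ lies in no saturated subtriangle relative to $\mff$, then some maximal independent subset $\mfr \subseteq \mff$ can be augmented by $x$ to an independent set; equivalently, that $\mfr \cup \{x\}$ violates no over-saturation constraint, for which I would verify that for every lattice upward triangle $T$, $|(\mfr \cup \{x\}) \cap \mfu(T)| \le \text{size}(T)$ — this is immediate unless $x \in \mfu(T)$ and $|\mfr \cap \mfu(T)| = \text{size}(T)$, which would make $T$ saturated by $\mfr$, hence (using the basis/augmentation structure or Lemma~\ref{lem:on saturated simplices}) saturated by $\mff$, contradicting our assumption on $x$. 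Once this lemma is in hand, both implications of the proposition are short.
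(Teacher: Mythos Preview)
The ``local description'' of closure you propose---$x\in\text{cl}(\mff)$ if and only if some lattice upward triangle $T$ with $x\in\mfu(T)$ is saturated by $\mff$---is false in the $\Leftarrow$ direction, and that is precisely the direction you invoke in the forward half of the proposition. A concrete failure: take $n=3$, let $T'$ be a size-$2$ subtriangle of $T_3$, and set $\mff=\mfu(T')$. Then $T_3$ itself is saturated by $\mff$ (since $|\mff|=3=\text{size}(T_3)$), yet for any $x\in\mfu(T_3)\setminus\mff$ one can choose two elements of $\mff$ together with $x$ to form an independent $3$-set, so $r(\mff\cup\{x\})=3>2=r(\mff)$ and $x\notin\text{cl}(\mff)$. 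The underlying issue is that ``$|\mff\cap\mfu(T)|\ge\text{size}(T)$'' does not force $\mff\cap\mfu(T)$ to span the restriction to $T$: those $\text{size}(T)$ elements may all lie inside a smaller strictly over-saturated subtriangle and hence be dependent, so $x$ can still raise the rank. Your reduction to a minimal over-saturated $T''\subseteq T$ does not rescue this, because the given $x$ need not belong to $T''$ at all.

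Your backward direction, by contrast, is sound and essentially matches the paper's: the implication you actually need there is ``$x\in\text{cl}(\mff)\setminus\mff\Rightarrow$ some triangle containing $x$ is over-saturated by $\mff$,'' and your sketch (choose a maximum independent $\mfr\subseteq\mff$; then $\mfr\cup\{x\}$ is dependent, so some $T\ni x$ is saturated by $\mfr$, hence over-saturated by $\mff$) is correct. For the forward direction the paper takes a different route and does not attempt an elementwise closure characterization; it instead passes to the \emph{maximal} over-saturated triangles $M_1,\dots,M_m$, invokes Lemma~\ref{lem:on saturated simplices} to get them pairwise disjoint, and bounds $r(\mff\cup\{X\})$ via the decomposition $r(\mfi)=\sum_i r(\mfi\cap\mfu(M_i))$ together with the identity $\sum_i\text{size}(M_i)=r(\mff)$---an argument that does not pass through your local biconditional at all.
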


\begin{proof}
	Suppose first that $\mff$ is a flat of $\T_n$. It suffices to prove that every maximal lattice upward triangle of $T_n$ over-saturated by $\mff$ is completely over-saturated by $\mff$. Let $M_1, \dots, M_m$ be the maximal lattice upward triangles of $T_n$ over-saturated by $\mff$. It follows by Lemma~\ref{lem:on saturated simplices} that the $M_i$'s are pairwise disjoint. Therefore the independent subsets of $\mff$ are unions $\mfi_1 \cup \dots \cup \mfi_m$, where each $\mfi_j$ is an independent set of $\T_n$ contained in $\mfu(M_j)$. Fix $j \in [m]$, and let us check that $M_j$ is completely over-saturated by $\mff$. To do so, take $X \in \mfu(M_j)$. Note that for each independent set $\mfi$ contained in $\mff \cup \{X\}$,
	\[
		r(\mfi) = \sum_{i=1}^m r(\mfi \cap \mfu(M_i)) \le \sum_{i=1}^m r(\mfu(M_i)) \le \sum_{i=1}^m \text{size}(M_i) = r(\mff).
	\]
	Thus, $r(\mff \cup \{X\}) = r(\mff)$. Since $\mff$ is a flat of $\T_n$, it follows that $X \in \mff$. This implies that $\mfu(M_j) \subseteq \mff$. Hence $M_i$ is completely over-saturated by $\mff$ for every $i \in [m]$.

	For the backward implication, let $M_1, \dots, M_m$ be the maximal lattice upward triangles of $T_n$ that are over-saturated by $\mff$ (and, therefore, completely over-saturated by~$\mff$). By Lemma~\ref{lem:on saturated simplices}, the $M_i$'s are pairwise disjoint. For each $j \in [m]$, let $\mfi_j$ be an independent set of $\T_n$ contained in $\mfu(M_j)$. Take now $X \in \mfu(T_n) \setminus \mff$, and let us verify that $\mfs := \{X\} \cup \mfi_1 \cup \dots \cup \mfi_m$ is also an independent set of $\T_n$. Suppose, by contradiction, that $T$ is a lattice upward triangle that is strictly over-saturated by $\mfs$. As $\mfs \setminus \{X\} \subseteq \mff$, one has that $T$ is over-saturated by $\mff$. Clearly, $X \in \mfu(T)$. This implies that $T$ is a lattice upward triangle over-saturated by $\mff$ not contained in any of the $M_i$'s, which is a contradiction. Hence $\mfs$ is an independent set of $\T_n$ and, as a result,
	\[
		r(\mff \cup \{X\}) \ge r(\mfs) = 1 + \sum_{j=1}^m r(\mfi_j) = 1 + r(\mff) > r(\mff).
	\]
	Since $X$ was arbitrarily taken in $\mfu(T_n) \setminus \mff$, it follows that $\mff$ is a flat of the tiling matroid $\T_n$, which concludes the proof.
\end{proof}

\begin{cor}
	Each flat of $\T_n$ consists of all unit upward triangles in the union of a disjoint collection of lattice upward triangles.
\end{cor}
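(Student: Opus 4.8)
The plan is to deduce the statement directly from Proposition~\ref{prop:flats of tiling matroids} and the structural facts extracted in its proof. Given a flat $\mff$ of $\T_n$, I would let $M_1, \dots, M_m$ denote the maximal lattice upward triangles of $T_n$ over-saturated by $\mff$ (taking the empty collection when $\mff = \emptyset$, in which case the statement is vacuous). As recorded in the proof of Proposition~\ref{prop:flats of tiling matroids}, Lemma~\ref{lem:on saturated simplices} guarantees that these triangles are pairwise disjoint, so $\{M_1, \dots, M_m\}$ is precisely the disjoint collection of lattice upward triangles we are after, and it remains to show $\mff = \bigcup_{j=1}^m \mfu(M_j)$.

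For the inclusion $\bigcup_{j=1}^m \mfu(M_j) \subseteq \mff$, I would simply invoke Proposition~\ref{prop:flats of tiling matroids}: each $M_j$ is over-saturated by the flat $\mff$, hence completely over-saturated by $\mff$, which means precisely $\mfu(M_j) \subseteq \mff$. For the reverse inclusion, I would take $X \in \mff$ and regard $X$ as a size-$1$ lattice upward triangle of $T_n$; then $|\mff \cap \mfu(X)| = 1 = \text{size}(X)$, so $X$ is over-saturated by $\mff$ and therefore contained in some maximal over-saturated triangle $M_j$, giving $X \in \mfu(M_j)$. Putting the two inclusions together yields $\mff = \bigcup_{j=1}^m \mfu(M_j)$, i.e., $\mff$ consists of all unit upward triangles lying in the union of the disjoint family $M_1, \dots, M_m$.

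There is no real obstacle here; this is a short corollary of Proposition~\ref{prop:flats of tiling matroids}. The only mild subtlety worth spelling out is the harmless observation that every unit upward triangle is itself a trivially saturated size-$1$ lattice upward triangle, which is what prevents any element of $\mff$ from lying outside all of the $M_i$; combined with the disjointness of the $M_i$, borrowed verbatim from the proof of Proposition~\ref{prop:flats of tiling matroids}, this completes the argument.
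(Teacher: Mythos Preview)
Your proposal is correct and is exactly the natural unpacking the paper has in mind: the corollary is stated without proof, immediately after Proposition~\ref{prop:flats of tiling matroids}, as a direct consequence of that proposition together with the disjointness of the maximal over-saturated triangles already recorded in its proof. Your two inclusions, including the observation that each $X \in \mff$ is itself a size-$1$ saturated triangle, are precisely the routine verifications implicit in the word ``Corollary''.
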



\begin{example} \label{ex:example of non-flat subsets}
	 Figure~\ref{fig:example of non-flat subsets} shows a flat (on the left) and a non-flat $\mfs$ (on the right). Even though $\mfs$ is not a flat, note that it consists of all the unit upward triangles inside a disjoint union of lattice upward triangles (the ones completely over-saturated by $\mfs$).
	\begin{figure}[h]
		\begin{center}
			\includegraphics[width=4cm]{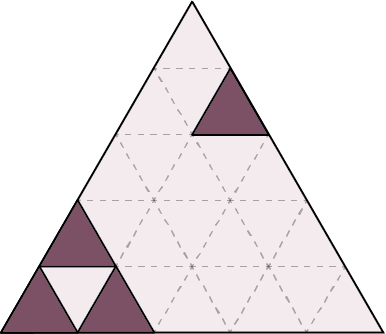}
			\hspace{2.5cm}
			\includegraphics[width=4.3cm]{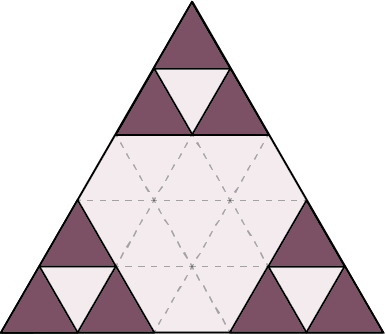}
			\caption{On the left, a flat of $\T_5$. On the right, a subset of $\mfu(T_5)$ that is not a flat of $\T_5$.}
			\label{fig:example of non-flat subsets}
		\end{center}
	\end{figure}
\end{example}

We conclude the paper with an open question for the reader. Since our main motivation comes from the set of lattice points of a regular simplex, the boundary regions of the matroids we have studied are equilateral triangles. However, it might be worthy to further investigate in the direction of the following question.

\begin{question}
	  For which more general regions allowing subdivisions into unit upward and downward triangles the results we have presented here can be extended?
\end{question}

\section*{Acknowledgements}

	While working on this paper, the first author was supported by the NSF-AGEP Fellowship. Both authors thank Federico Ardila for giving the initial motivation for this project and Lauren Williams for many useful suggestions.

\end{document}